\newtheorem{thm}{Theorem}
\newtheorem{lem}[thm]{Lemma}
\newtheorem{defin}[thm]{Definition}
\newtheorem{rem}[thm]{Remark}
\newtheorem{prop}[thm]{Proposition}
\newtheorem{cor}[thm]{Corollary}
\newtheorem{examp}[thm]{Example}
\newtheorem{problem}{Question}
\numberwithin{equation}{section}
\numberwithin{thm}{section}
\newcommand{\CC}{{\mathbb C}}
\newcommand{\NN}{{\mathbb N}}
\newcommand{\KK}{{\mathbb K}}
\newcommand{\EE}{{\EuScript E}}
\newcommand{\RR}{{\mathbb R}}
\newcommand{\ZZ}{{\mathbb Z}}
\newcommand{\QQ}{{\mathbb Q}}
\newcommand{\MM}{{\mathfrak M}}
\newcommand{\mm}{{\mathfrak m}}
\newcommand{\IIP}{{\mathcal I}} 
\newcommand{\IIS}{{\mathfrak I}}
\newcommand{\cerosP}{{\large \textsf{V}}}
\newcommand{\cerosS}{{\large \textsc{V}}}
\newcommand{\SSS}{{\sl S}_\omega}
\newcommand{\ordser}[1]{ val}
\newcommand{\inser}[1]{ in}
\newcommand{\inpol}[2]{{ \textsc{In}}_{#2}}
\newcommand{\inPol}[2]{{{ \textsf{In}}_{#1,#2}}}
\newcommand{\ordpol}[2]{{\textsc{val}}_{#2}}
\newcommand{\ordPol}[2]{{{ \textsf{val}}_{#1,#2}}}
\newcommand{\idinpol}[2]{{{\mathcal I}\textsc{n}}_{#2}}
\newcommand{\idinPol}[2]{{{\mathcal I}{\textsf{n}}}_{#1,#2}}
\newcommand{\idmax}[1]{{\mathcal J}_{#1}}
\newcommand{\ceros}[1]{{\bf V}}
\newcommand{\toro}[1]{{\bf T}_{#1}}
\newcommand{\den}{{\bf d}}
\newcommand{\seriedef}{{\bf ser}}
\newcommand{\secuencia}{{\bf seq}}
\begin{document}
\markboth{F. Aroca, G. Ilardi and L. López de Medrano} {Puiseux
power series solutions for systems of equations}
\title{PUISEUX POWER SERIES SOLUTIONS FOR SYSTEMS OF EQUATIONS}
\author{Fuensanta Aroca, Giovanna Ilardi and Luc\'ia L\'opez de
Medrano}

\address{Instituto de Matem\'aticas, Unidad Cuernavaca
 Universidad Nacional Aut\'onoma de M\'exico,
A.P. 273-3 Admon. 3,
Cuernavaca, Morelos, 62251 M\'exico}
\address{Dipartimento Matematica Ed Applicazioni ``R. Caccioppoli''
Universit\`{a} Degli Studi Di Napoli ``Federico II'' Via Cintia -
Complesso Universitario Di Monte S. Angelo 80126 - Napoli - Italia}
\subjclass[2000]{Primary 14J17, 52B20; Secondary 14B05, 14Q15, 13P99}
\keywords{Puiseux series, Newton polygon, singularity, tropical variety.}

%

\maketitle
\begin{abstract}
    We give an algorithm to compute term by term multivariate Puiseux series
    expansions of series arising as local parametrizations of zeroes of systems of
    algebraic equations at singular points. The algorithm is an extension of Newton's method
    for plane algebraic curves replacing the Newton polygon by the tropical variety of the ideal generated by the
    system.  As a corollary we deduce a property of
    tropical varieties of  quasi-ordinary singularities.
\end{abstract}


\section*{Introduction}

  Isaac Newton described an algorithm to compute term
by term the series arising as $y$-roots of algebraic
equations $f(x,y)=0$ \cite["Methodus fluxionum et serierum infinitorum"
]{Newton:1670}. The main tool used in the algorithm is a geometrical
object called the Newton polygon. The roots found belong to a field
of power series called Puiseux series \cite{Puiseux:1850}.

The extension of Newton-Puiseux's algorithm for equations of the
form $f(x_1,\ldots ,x_N,y)=0$ is due to J. McDonald
\cite{JMcDonald:1995}. As can be expected,  the Newton polygon is
extended by the Newton polyhedron.

An extension for systems of equations of the form $\{
f_1(x,y_1,\ldots ,y_M)= {\cdots} = {f_r(x,y_1,\ldots ,y_M)=0} \}$ is
described in \cite{Maurer:1980} using tropism and in
\cite{JensenMarkwig:2008} using tropical geometry.

J. McDonald gives an extension to systems of equations $$\{
f_1(x_1,\ldots ,x_N,y_1,\ldots ,y_M)= {\cdots} = {f_r(x_1,\ldots
,x_N,y_1,\ldots ,y_M)=0} \}$$ using the Minkowski sum of the Newton
Polyhedra. However, this algorithm works only for ``general"
polynomials \cite{JMcDonald:2002}.

  In this note we extend Newton's method to any dimension and
  codimension. The Newton polyhedron of a polynomial is replaced by its normal
  fan.
  The tropical variety comes in naturally as the intersection
  of normal fans. We prove that, in
    an algebraically closed field of characteristic zero,  the algorithm given always works.

The natural field into which to embed the algebraic closure of
polynomials in one variable is the field of Puiseux series. When it
comes to several variables there is a family of fields to choose
from. Each field is determined by the choice of a vector
$\omega\in\RR^N$ of rationally independent coordinates.
The need to choose $\omega$ had already appeared when
working with a hypersurface
\cite{JMcDonald:1995,GonzalezPerez:2000}.
The introduction of the family of fields is done in \cite{ArocaIlardi:2009}.\\

 We start the article recalling the main statements on which the
Newton-Puiseux method for algebraic plane curves relies (Section
\ref{Newton-Puiseux's Method}) and extending these statements to the
general case (Section \ref{The general statement}).

In the complex case, a series of positive order, obtained by the
Newton-Puiseux method for algebraic plane curves, represents a local
parametrization of the curve around the origin. In Section \ref{The
local parametrizations defined by the series} we explain how an
$M$-tuple of series arising as a solution to the general
Newton-Puiseux statement also represents a local parametrization
recalling results form \cite{FAroca:2004}.

 In Section \ref{The fields} we recall the definition given in
\cite{ArocaIlardi:2009} of the family of fields of $\omega$-positive
Puiseux series and their natural valuation. Then, in Section
\ref{The extended ideal} we reformulate the question using the
fields introduced and show how it becomes a lot simpler.

Then we work in the ring of polynomials with coefficients
$\omega$-positive Puiseux series (Sections \ref{Weighted orders and
initial parts}, \ref{Initial Ideals} and \ref{A special case of
Kapranow's Theorem}).

The Newton-Puiseux algorithm is based on the fact that the first
term of a $y$-root is the $y$-root of the equation restricted to an
edge of the Newton polygon. The analogous of this fact is expressed
in terms of initial ideals. In Sections \ref{Weighted orders and
initial parts} and \ref{Initial Ideals}, weighted orders and initial
ideals
 are defined. In Section \ref{A special case of Kapranow's Theorem} we
prove that initial parts of zeroes are zeroes of weighted initial
ideals.

Then we consider ideals in the ring of polynomials $$\KK [x^*,y]:=
\KK [x_1,{x_1}^{-1},\ldots,x_N,{x_N}^{-1},y_1,\ldots y_M]$$ (Section
\ref{Polynomial initial ideals}) and characterize initial ideals
with zeroes in a given torus. This is done in terms of the tropical
variety of the ideal (Section \ref{Tropicalization}).

Sections \ref{omega-data} to \ref{The solutions} are devoted to
explaining the algorithm. In the last section we show the
theoretical implications of the extension of Newton-Puiseux
algorithm by giving a property of
the tropical variety associated to a quasi-ordinary singularity.\\

\section{Newton-Puiseux's Method.}\label{Newton-Puiseux's Method}

Given an algebraic plane curve ${\mathcal C}:=\{f(x,y)=0\}$, the
Newton-Puiseux method constructs all the fractional power series
$y(x)$ such that $f(x,y(x))=0$. These series turn out to be Puiseux
series.

 Newton-Puiseux's method is based on two points:

Given a polynomial $f(x,y)\in\KK [x,y]$
\begin{enumerate}
    \item\label{dos}
    $cx^\mu$ is the first term of a Puiseux series  $y(x)=cx^\mu+...$
    with the property $f(x,y(x))=0$ if and only if
    \begin{itemize}
        \item
        $\frac{-1}{\mu}$
        is the slope of some edge $L$ of the Newton polygon of $f$.
        \item
        $cx^\mu$ is a solution of the characteristic equation associated to
        $L$.
    \end{itemize}
    \item\label{tres}
    If we iterate the method: Take $c_i x^{\mu_i}$ to be a solution of the characteristic equation associated to the edge of slope
    $\frac{-1}{\mu_i}$ of
    $f_i := f_{i-1} (x, y+ c_{i-1} x^{\mu_{i-1}})$ with
    $\mu_i>\mu_{i-1}$.
    We do get a Puiseux series $\sum_{i=0}^{\infty} c_i x^{\mu_i}$   with the property $f(x,y(x))=0$.
\end{enumerate}

In this paper we prove the extension of these points: Point
\ref{dos} is extended in
Section \ref{Tropicalization}
 Theorem
\ref{Extension del punto uno} and, then, Point \ref{tres} in
Section \ref{The solutions}
 Theorem \ref{ultimo teorema}.

Point \ref{dos} is necessary to assure that the sequences in Point
\ref{tres} always exist. But Point \ref{dos} does not imply that any
sequence constructed in such a way leads to a solution. Both results
have led to a deep understanding of algebraic plane curves.

\section{The general statement.}\label{The general statement}

 Take an $N$-dimensional algebraic variety $V\subset \KK^{N+M}$.
There is no hope to find $k\in\NN$ and an $M$-tuple of series
$y_1,\ldots ,y_M$ in $\KK [[x_1^k,\ldots ,x_M^k]]$ such that the
substitution  $x_j\mapsto y_j(x_1,\ldots ,x_N)$ makes $f$
  identically zero for all $f$ vanishing on $V$.
(Parametrizations covering a whole neighborhood of a singularity do
not exist in general.)

McDonald's great idea was to look for series with exponents in
cones. Introducing rings of series with exponents in cones served to
prove Newton-Puiseux's statement for the hypersurface case
\cite{JMcDonald:1995} and has been the inspiration of lots of other
results (both in algebraic geometry \cite{SotoVicente:2006,GonzalezPerez:2000} and differential equations
\cite{TAranda:2002,FArocaJCano:2001}).

In order to give a general statement for all dimension and codimension, we need to recall some definitions of convex geometry:

A {\bf convex rational polyhedral cone} is a subset of
$\mathbb{R}^N$ of the form
\begin{displaymath}
\sigma =\{ \lambda_1v_1+\cdots+\lambda_r v_r \mid \lambda_i\in
\mathbb{R}, \lambda_i\geq 0\},
\end{displaymath}
where $v_1,\dots,v_r\in \mathbb{Q}^N$ are vectors.

 A cone is said
to be {\bf strongly convex} if it contains no nontrivial linear
subspaces.

A {\bf fractional power series} $\varphi$ in $N$ variables is expressed as
\[
\varphi=\sum_{\alpha \in {\QQ}^N} c_\alpha x^\alpha, \qquad c_\alpha
\in {\KK}, \quad x^{\alpha}:=x_1^{\alpha_1} \dots x_N^{\alpha_N}.
\]
The {\bf set of exponents} of $\varphi$ is the set
\[
{\mathcal E}(\varphi ):=\{\alpha\in {\QQ }^N\mid c_\alpha\neq 0\}.
\]

A fractional power series $\varphi$ is a {\bf Puiseux series} when
its set of exponents is contained in a lattice. That is, there
exists $K\in\NN$ such that ${\mathcal E}(\varphi )\subset
{\frac{1}{K}\ZZ}^N$.

Let $\sigma\subset\RR^N$ be a strongly convex cone. We say that a
Puiseux series $\varphi$ has {\bf exponents in a translate of
$\sigma$} when there exists $\gamma\in\QQ^N$ such that ${\mathcal E}
(x^\gamma\varphi )\subset \sigma$.

It is easy to see that the set of Puiseux series with exponents in
translates of a strongly convex cone $\sigma$ is a ring. (But, when
$N>1$, it is not a field).

Given a non-zero vector $\omega\in \RR^N$, we say that a cone
$\sigma$ is {\bf $\omega$-positive} when for all $v\in\sigma$ we have
$v\cdot\omega\geq 0$. If $\omega$ has rationally independent
coordinates, an $\omega$-positive rational cone is always strongly
convex.

Denote by $\cerosP (\IIP)$ the set of common zeroes of the ideal
$\IIP$. Extending Newton-Puiseux's statement for an algebraic
variety of any dimension and codimension is equivalent to answering
the following question:

\begin{problem}\label{problema}Given an ideal ${\IIP}\subset \KK [x_1,\ldots ,x_{N+M}]$
such that the projection
\begin{equation}\label{la proyeccion}
\begin{array}{cccc}
    \pi:
        & \cerosP({\IIP })
            &\longrightarrow
                & \KK^N\\
        & (x_1,\ldots ,x_{N+M})
            & \mapsto
                & (x_1,\ldots ,x_N)
\end{array}
\end{equation}
is dominant and of generic finite fiber.

Given $\omega\in\RR^N$ of rationally independent coordinates. Can one always find
an $\omega$-positive rational cone $\sigma$
and an $M$-tuple $\phi_1,\ldots, \phi_M$ of Puiseux series with
exponents in some translate of $\sigma$ such that \[ f(x_1,\ldots
,x_N,\phi_1(x_1,\ldots ,x_N),\ldots, \phi_M(x_1,\ldots ,x_N))=0.
\]
for any $f\in\IIP$?
\end{problem}

If the projection is not {dominant} the problem has no
solution. If the generic fiber is not {finite} an output
will not be a parametrization.

To emphasize the roll of the projection, the indeterminates will be
denoted by $x_1,\ldots ,x_N,y_1,\ldots y_M$. We will work with an
ideal $\IIP\subset \KK [x,y]:= \KK [x_1,\ldots ,x_N,y_1,\ldots
,y_M]$.

With this notation, the set of common zeroes of $\IIP$ is given by
\[
\cerosP(\IIP )= \{ (x,y)\in \KK^{N+M}\mid f(x,y)=0,\forall
f\in\IIP\}.
\]
\begin{defin}
We will say that an ideal $\IIP\subset\KK [x,y]$ is {\bf
N-admisible} when the Projection (\ref{la proyeccion}) is dominant
and of finite generic fiber.

We will say that an algebraic variety $V\subset \KK^{N+M}$ is
N-admissible when its defining ideal is N-admissible.

Given an N-admissible ideal $\IIP\subset\KK [x,y]$, and a vector
$\omega\in\RR^N$ of rationally independent coordinates; an $M$-tuple
$\phi_1,\ldots ,\phi_M$ solving Question \ref{problema} will be
called an {\bf $\omega$-solution for $\IIP$}.
\end{defin}


\section{The local parametrizations defined by the series.}\label{The local parametrizations defined by the series}

Let $({\mathcal C},(0,0))$ be a complex plane algebraic curve
singularity
\[
(0,0)\in{\mathcal C}:=\{(x,y)\in \CC^2\mid f(x,y)=0\}
\]
where $f$ is a polynomial with complex coefficients.

 Each output of the
Newton-Puiseux method  $y(x)=c_0x^{\mu_0}+..$ with $\mu_0>0$ is a
convergent series in a neighborhood of $0$. This series corresponds
to a multi-valued mapping defined in a neighborhood of the origin
$0\in U\subset\CC$
\[
\begin{array}{cccc}
    \varphi:
        & U
            & \longrightarrow
                & {\mathcal C}\\
        & x
            &\mapsto
                & (x,y(x))
\end{array}
\]
that is compatible with the
 projection
\[
\begin{array}{cccc}
    \pi:
        &{\mathcal C}
            &\longrightarrow
                &\CC\\
        &(x,y)
            &\mapsto
                 &x,
\end{array}
\]
that is, $\pi\circ\varphi$ is the identity on $U$.

When ${\mathcal C}$ is analytically irreducible at $(0,0)$, the
image $\varphi (U)$ is a neighborhood  of the curve at $(0,0)$. The
series $\varphi$ contains all the topological and analytical
information of $({\mathcal C}, (0,0))$ and there are different ways
to recover it (see for example
\cite{Walker:1978,BrieskornKnorrer:1986}).

If $\omega\in{\RR_{>0}}^N$ has rationally independent  positive
coordinates, then the first orthant is $\omega$-positive and we may
suppose that the series of an output of the extended Newton-Puiseux
method has exponents in a cone $\sigma$ that contains the first
orthant.

Let $\sigma$ be a strongly convex cone that contains the first
orthant. In \cite{FAroca:2004} it is shown that (when it is not
empty) the domain of convergence of a series with exponents in a
strongly convex cone $\sigma$ contains an open set $W$ that has the
origin as accumulation point. Moreover, by the results of \cite[Prop
3.4]{FAroca:2004}, the intersection of a finite number of such
domains is non-empty.

Let $V$ be an N-admissible complex algebraic variety embedded in
$\CC^{N+M}$ and let $\omega\in{\RR_{>0}}^N$ be of rationally
independent coordinates.
 Each $M$-tuple of series $(y_1(\underline{x}),\ldots ,y_M(\underline{x}))$ found solving Question \ref{problema}
     corresponds to a multi-valued function defined on an
open set $W\subset\CC^N$ that has the origin as accumulation point
\[
\begin{array}{cccc}
    \varphi:
        &W
            &\longrightarrow
                &V\\
        &\underline{x}
            &\mapsto
                (\underline{x},y_1(\underline{x}),\ldots

                ,y_M(\underline{x}).
\end{array}
\]
The image $\varphi (W)$ contains an open set (a wedge)
 of $V$.

 When

\begin{equation}\label{el valor es positivo}
 \omega\cdot\alpha > 0\quad\text{for all}\quad \alpha\in\bigcup_{j=1,\ldots
,M}\EE (y_j)
\end{equation}
(when for each $j$, $y_j$ does not have constant term and its set of exponents is contained in an $\omega-$positive cone with apex at the origin) the open set has the origin as accumulation point.

 Since analytic continuation is unique, when the origin is an analytically irreducible singularity,
 this parametrization contains all the topological
and analytic information of the singularity.

\section{The field of $\omega$-positive Puiseux series.}\label{The fields}

In all that follows $\omega$ will be a vector in $\RR^N$ of
rationally independent coordinates. We will work with an
algebraically closed field $\KK$ of characteristic zero.

Given a N-admissible ideal we are looking for solutions in the ring
of Puiseux series with exponents in some translate of an
$\omega$-positive cone $\sigma$. The cone $\sigma$ may be different
for different ideals. It is only natural to work with the infinite
union of all these rings.

We say that a Puiseux series $\varphi$ is {\bf $\omega$-positive}
when there exists $\gamma\in\QQ^N$ and an $\omega$-positive cone
$\sigma$ such that ${\mathcal E} (x^\gamma\varphi )\subset \sigma$.
The set of $\omega$-positive Puiseux series was introduced in
\cite{ArocaIlardi:2009} where it was proved that it is an
algebraically closed field. This field is called the \textbf{field
of $\omega$-positive Puiseux series} and will be denoted by $\SSS$.

The vector $\omega$
induces a total order on $\QQ^N$
\[
\alpha\leq \alpha
'\Longleftrightarrow\omega\cdot\alpha\leq\omega\cdot\alpha '.
\]

This gives a natural way to choose the first term of a series in
$\SSS$. This is the order we will use to compute the $\omega$-solutions ``term by
term''.

More precisely, the {\bf order} of an element
$\phi=\sum_{\alpha}c_\alpha x^\alpha$ in $\SSS$ is
\[
        \ordser{\omega} (\phi) :=\min_{\alpha \in {\mathcal
        E}(f)}
        \omega \cdot \alpha
\]
and its {\bf first term} is
\[
    \inser{\omega} (\phi ):=  c_\alpha
    x^\alpha\qquad\text{where}\qquad \omega \cdot \alpha=\ordser{\omega}
    (\phi).
\]
Set $\ordser{\omega}(0) \colon = \infty$ and $\inser{\omega} (0)=0$.

\begin{rem}\label{propiedades de valser e inser}
    For $\phi ,\phi'\in {\sl S}_\omega$
    \begin{enumerate}
        \item
        $\ordser{\omega} (\phi+\phi')\geq\min \{\ordser{\omega} (\phi), \ordser{\omega}
        (\phi')\}$.\label{Propiedad valuacion 1}
        \item
        $\ordser{\omega} (\phi+\phi')\neq\min \{\ordser{\omega} (\phi), \ordser{\omega}(\phi')\}$ if and only if
        $\ordser{\omega} (\phi)=\ordser{\omega} (\phi')$ and $\inser{\omega}(\phi) +\inser{\omega}(\phi')=0$.\label{segunda propiedad valser}
        \item
        $\ordser{\omega} (\phi\cdot\phi')= \ordser{\omega}(\phi) +\ordser{\omega}(\phi')$. Moreover $\inser{\omega} (\phi\cdot\phi')= \inser{\omega}(\phi)\cdot\inser{\omega}(\phi')$.
        \label{multiplication}
        \item
        $\inser{\omega} (\inser{\omega} (\phi))=\inser{\omega} (\phi) .$
        \item\label{orden y ramificacion}
         $\ordser{\omega}(\phi ({x_1}^r,\ldots ,{x_N}^r))=r\ordser{\omega}(\phi (x_1,\ldots ,x_N))$ for  any
         $r\in\QQ$.
    \end{enumerate}
\end{rem}
A map from a ring into the reals with Properties \ref{Propiedad
valuacion 1} and \ref{multiplication} is called a {\bf valuation}.

The {\bf first $M$-tuple} of an element $\varphi=(\varphi_1,\ldots
,\varphi_M)\in\SSS^M$ is the $M$-tuple of monomials
\[
\inser{\omega}(\varphi)=(\inser{\omega}(\varphi_1),\ldots
,\inser{\omega}(\varphi_M))
\]
and the {\bf order} of $\varphi$ is the $M$-tuple of orders
\[
\ordser{\omega}(\varphi)=(\ordser{\omega}(\varphi_1),\ldots ,\ordser{\omega}(\varphi_M)).
\]

\begin{rem}
With the language introduced,
Equation (\ref{el valor es positivo})
is equivalent to $\ordser{omega} (y)\in {\RR_{>0}}^M$.

\end{rem}

\section{The extended ideal.}\label{The extended ideal}



Given an ideal $\IIP\subset \KK [x,y]$, let $\IIP^*\subset \KK
[x^*,y]$ be the extension of $\IIP$ to $\KK [x^*,y]$ via the natural
inclusion.

We have
\[
 {\bf V} ( \IIP^*\cap \KK [x,y])= \overline{ {\bf V}(\IIP) \setminus
 \{ x_1\cdots x_N=0\}}.
\]

In regard to our question, it is then equivalent to work with ideals
in $\KK [x,y]$ or in $\KK[x^*,y]$. For technical reasons we will
start with ideals in $\KK[x^*,y]$.

\begin{defin}
And ideal $\IIP\subset\KK [x^*,y]$ is said to be {\bf N-admissible}
if the ideal $\IIP\cap\KK [x,y]\subset\KK [x,y]$ is N-admissible.
\end{defin}

Given an ideal $\IIP\subset \KK[x^*,y]$, let ${\IIP}^{\rm
e}\subset\SSS [y]$ be the extension of $\IIP$
via the natural
inclusion
\[
\KK [x^*, y]=\KK[x^*][y]\hookrightarrow {\sl S}_{\omega} [y].
\]
When $\IIP$ is an N-admissible ideal,  $\cerosS ({\IIP}^{\rm e})$ is
a discrete subset of ${\SSS}^M$. By definition, $\phi\in\cerosS
({\IIP}^{\rm e})$, if and only if $\phi$ is an $\omega$-solution for
$\IIP$.

Question \ref{problema} may be reformulated as follows:\\

\begin{problem}{\bf Reformulation of Question \ref{problema}} Given an
N-admissible ideal $\IIP\subset\KK [x^*,y]$, and a vector
$\omega\in\RR^N$ of rationally independent coordinates. Find the
(discrete) set of zeroes of in ${\SSS}^M$ of the extended ideal
${\IIP}^{\rm e}\subset \SSS [y]$ .
\end{problem}

A polynomial $f\in\KK [x^*,y]$ may be considered a polynomial in
$N+M$ variables with coefficients in $\KK$, or a polynomial in $M$
variables with coefficients in $\KK [x^*]\subset\SSS$. To cope with
this fact we will use a slightly different
notation:
\begin{itemize}
    \item[*]
    $\ordser\omega$ and $\inser\omega$ refer to the field $\SSS$.
    (Section \ref{The fields}.)
    \item[*]
    $\ordpol\omega\eta$, $\inpol\omega\eta$ and $\idinpol\omega\eta$ refer to the
    ring  $\SSS [y]$. (Sections \ref{Weighted orders and initial parts} and
    \ref{Initial Ideals}.)
    \item[*]
     $\ordPol\omega\eta$, $\inPol\omega\eta$ and $\idinPol\omega\eta$ refer to the ring  $\KK [x^*,
     y]$. (Section \ref{Polynomial initial ideals}.)
\end{itemize}
Given an ideal $\IIP\subset\KK [x,y]$ the notation $\cerosP (\IIP)$
will stand for the set of common zeroes of $\IIP$ in $\KK^{N+M}$.
Given an ideal $\IIS\subset\SSS [y]$ the set of common zeroes of
$\IIS$ in ${\SSS}^{M}$ will be denoted by $\cerosS (\IIS)$.

\section{Weighted orders and initial parts in $\SSS [y]$.}\label{Weighted orders and initial parts}

The classical definition of weighted order and initial part
considers as weights only vectors in $\RR^M$. For technical reasons we need to
extend the classical definition to weights in
$\left(\RR\cup\{\infty\}\right)^M$.

A polynomial in $M$ variables with coefficients in $\SSS$ is written in the form
\[
f=\sum_{\beta\in E\subset ({\ZZ_{\geq 0}})^M}\phi_\beta y^\beta,\qquad \phi_\beta\in\SSS,\qquad y^\beta := {y_1}^{\beta_1}\cdots {y_M}^{\beta_M}
\]
where $E$ is a finite set.

Set $\infty\cdot a=\infty$ for $a\in\RR^*$ and $\infty\cdot 0=0$. A vector $\eta\in {(\RR\cup\{\infty\})}^M$ induces a (not necessarily total) order on the terms of $f$.

 The {\bf $\eta$-order of $f$ as an element of ${\sl S}_\omega
[y]$} is
\[
\ordpol{\omega}{\eta}(f):=\min_{\phi_\beta\neq
0}\left(\ordser{\omega}\phi_\beta +\eta\cdot\beta\right)
\]
and, if $\ordpol{\omega}{\eta}f<\infty$, the {\bf $\eta$-initial
part of $f$ as an element of ${\sl S}_\omega [y]$} is
\[
\inpol{\omega}{\eta}(f):=\sum_{\ordser{\omega}\phi_\beta+\eta\cdot\beta=\ordpol{\omega}{\eta}(f)}(\inser{\omega}\phi_\beta)
y^\beta .
\]

\begin{examp}\label{inpol de un binomio}
    Consider a binomial of the form $y^\beta -\phi$ we have
    \[
    \ordpol{\omega}{\eta} (y^\beta -\phi )=
    \left\{
        \begin{array}{ll}
            \eta\cdot\beta
                &\text{if}\quad \eta\cdot\beta\leq\ordser{\omega}(\phi)\\
            \ordser{\omega}(\phi)
                &\text{if}\quad \ordser{\omega}\phi\leq\eta\cdot\beta
        \end{array}
    \right.
    \]
    and
    \[
    \inpol{\omega}{\eta}(y^\beta -\phi )=
    \left\{
        \begin{array}{ll}
            y^\beta
                &\text{if}\quad \eta\cdot\beta <\ordser{\omega}(\phi)\\
            y^\beta -\inser{\omega}(\phi)
                &\text{if}\quad \eta\cdot\beta =\ordser{\omega}(\phi)\\
            \inser{\omega}(\phi)
                &\text{if}\quad \ordser{\omega}(\phi)<\eta\cdot\beta .
        \end{array}
    \right.
    \]
\end{examp}

\begin{lem}
\label{key}
    If $\varphi\in\SSS^M$ is a zero of $f\in\SSS [y]$, then $\inser{\omega} (\varphi)$ is a
    zero of $\inpol{\omega}{\ordser{\omega}\varphi}(f)$.
\end{lem}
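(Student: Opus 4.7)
The plan is to expand the equation $f(\varphi)=0$ inside $\SSS$ and read off what must cancel at the minimal $\omega$-order. Write $f=\sum_{\beta\in E}\phi_\beta y^\beta$ with $E\subset(\ZZ_{\geq 0})^M$ finite, and set $\eta:=\ordser{\omega}(\varphi)$, $\psi:=\inser{\omega}(\varphi)$, and $m:=\ordpol{\omega}{\eta}(f)$. First I would apply the multiplicativity of the valuation on $\SSS$, i.e.\ Remark~\ref{propiedades de valser e inser}(\ref{multiplication}), to each monomial substitution, obtaining
\[
\ordser{\omega}(\phi_\beta\varphi^\beta)=\ordser{\omega}(\phi_\beta)+\beta\cdot\eta,\qquad
\inser{\omega}(\phi_\beta\varphi^\beta)=\inser{\omega}(\phi_\beta)\,\psi^\beta,
\]
with the conventions $\infty\cdot 0=0$, $\infty\cdot a=\infty$ for $a>0$, and $\inser{\omega}(0)=0$. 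This takes care of the case when some coordinate $\eta_i$ equals $\infty$ (that is, $\varphi_i=0$): whenever $\beta_i>0$ both sides vanish, and such a $\beta$ never contributes to the minimum defining $m$ unless $m=\infty$, which is excluded because $\inpol{\omega}{\eta}(f)$ is assumed to be defined.

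The second step is a cancellation-of-leading-terms lemma in $\SSS$: if $\{\xi_k\}_{k\in F}$ is a finite family with $\sum_k\xi_k=0$ and $v:=\min_k\ordser{\omega}(\xi_k)<\infty$, then $\sum_{\{k\,:\,\ordser{\omega}(\xi_k)=v\}}\inser{\omega}(\xi_k)=0$. I would prove this by an immediate induction on $|F|$ using parts (\ref{Propiedad valuacion 1}) and (\ref{segunda propiedad valser}) of Remark~\ref{propiedades de valser e inser}: if exactly one $\xi_k$ attained the minimum $v$, then by (\ref{Propiedad valuacion 1}) the whole sum would have order $v$ and initial part equal to $\inser{\omega}(\xi_k)\neq 0$, contradicting $\sum_k\xi_k=0$; so at least two terms at order $v$ exist, and by (\ref{segunda propiedad valser}) one can fuse two of them into a single element of order $\geq v$, strictly decreasing $|F|$ and iterating.

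Applying this cancellation lemma to $\{\phi_\beta\varphi^\beta\}_{\beta\in E}$ yields
\[
\sum_{\ordser{\omega}(\phi_\beta)+\beta\cdot\eta=m}\inser{\omega}(\phi_\beta)\,\psi^\beta=0,
\]
and the left-hand side is by definition $\bigl(\inpol{\omega}{\eta}(f)\bigr)(\psi)$, which is the desired conclusion. The only real content is the elementary cancellation lemma on $\SSS$, which is a routine consequence of the valuation axioms; the rest is bookkeeping, the one mildly delicate ingredient being the consistent handling of infinite weights via the conventions $\infty\cdot 0=0$ and $\inser{\omega}(0)=0$.
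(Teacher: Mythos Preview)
Your proof is correct and follows essentially the same route as the paper's own argument: both compute $\ordser{\omega}(\phi_\beta\varphi^\beta)$ via multiplicativity (Remark~\ref{propiedades de valser e inser}(\ref{multiplication})), then use the valuation axioms (Remark~\ref{propiedades de valser e inser}(\ref{segunda propiedad valser})) to force the sum of initial terms at the minimal level to vanish. Your version is simply more explicit, spelling out the cancellation-of-leading-terms step as an inductive lemma and handling the $\eta_i=\infty$ conventions overtly, whereas the paper compresses this into a single citation of Remark~\ref{propiedades de valser e inser}(\ref{segunda propiedad valser}).
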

\begin{proof}
Set $\eta :=\ordser{\omega}(\varphi)$. For $\phi\in\SSS$ and $\beta\in {\ZZ_{\geq 0}}^M$ the following equality holds:
\begin{equation}\label{relacion entre ordser y ordpol}
    \ordser{\omega} \left(\phi\varphi^\beta\right) \stackrel{\ref{propiedades de valser e inser},\, \ref{multiplication}}{=} \ordser{\omega}(\phi) +\eta\cdot\beta = \ordpol{\omega}{\eta}\phi y^\beta.
\end{equation}
Suppose that $\varphi\in\SSS^M$ is a zero of $f=\sum_{\beta}\phi_\beta y^\beta$, we have
\[
\begin{array}{lcl}
    \sum_{\beta}\phi_\beta \varphi^\beta=0
        & \stackrel{(\ref{relacion entre ordser y ordpol})+
        \ref{propiedades de valser e inser},\,\ref{segunda propiedad valser} }{\Longrightarrow}
            &\sum_{\ordser{\omega}\left(\phi_\beta\varphi^\beta\right) =\ordpol{\omega}{\eta} (f)}
            \inser{\omega}\left(\phi_\beta\varphi^\beta\right)=0\\

        &\stackrel{\ref{propiedades de valser e inser},\, \ref{multiplication}}{\Longrightarrow}
            & \sum_{\ordser{\omega}\left(\phi_\beta\right)+\eta\cdot\beta =\ordpol{\omega}{\eta} (f)}
            \inser{\omega}\phi_\beta {\left(\inser{\omega}\varphi\right)}^\beta=0\\
        &\stackrel{\text{By definition}}{\Longrightarrow}
            & \inpol{\omega}{\eta}(f)\left(\inser{\omega}\varphi\right)=0.
\end{array}
\]
\end{proof}

For any $f\in\SSS [y]$ and $\eta\in {(\RR\cup\{\infty\})}^M$ we have $\inpol{\omega}{\eta}(f)\in \KK ({x^{\frac{1}{K}}})[y]$.

An element of the form $cx^\alpha$ with $c\in\KK$ will be called a
monomial.

\begin{lem}\label{Sistema de coeficiente} Given $f\in\SSS [y]$, let $\mm(x)\in {\KK
({x^{\frac{1}{K}}})}^M$ be an $M$-tuple of monomials. Set $\eta:=
\ordser{\omega} \mm$. We have $\inpol{\omega}{\eta}(f)\in \KK
({x^{\frac{1}{K}}})[y]$ and
\[
\inpol{\omega}{\eta}(f(x,\mm(x)))=0\Longleftrightarrow
\inpol{\omega}{\eta}(f(\underline{1},\mm(\underline{1})))=0.
\]
    An $M$-tuple of monomials $\mm\in {\KK ({x^{\frac{1}{K}}})}^M$ with $\ordser{\omega} \mm=\eta$ is
    a zero of $\inpol{\omega}{\eta}(f)$ as an element of $\KK ({x^{\frac{1}{K}}})[y]$ if and only if $\mm(\underline{1})$
    is a zero of $\inpol{\omega}{\eta}(f(\underline{1},y))$.
\end{lem}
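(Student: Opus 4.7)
The plan is to unpack the definition of $\inpol{\omega}{\eta}(f)$ termwise and then exploit the rational independence of the coordinates of $\omega$ to reduce both sides of the claimed equivalence to the same scalar identity.

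Writing $f=\sum_\beta\phi_\beta(x)\,y^\beta$ with $\phi_\beta\in\SSS$, put $\inser{\omega}\phi_\beta=a_\beta x^{\gamma_\beta}$ with $a_\beta\in\KK$ and $\omega\cdot\gamma_\beta=\ordser{\omega}\phi_\beta$. By definition only the finitely many indices in
\[
B:=\{\beta\mid\ordser{\omega}\phi_\beta+\eta\cdot\beta=\ordpol{\omega}{\eta}(f)\}
\]
contribute, so $\inpol{\omega}{\eta}(f)=\sum_{\beta\in B}a_\beta x^{\gamma_\beta}y^\beta$. Taking a common denominator of the finitely many $\gamma_\beta$ produces an integer $K$ with $\inpol{\omega}{\eta}(f)\in\KK({x^{1/K}})[y]$, which establishes the first assertion.

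For the equivalence, write $\mm(x)=(c_1x^{\alpha_1},\ldots,c_Mx^{\alpha_M})$ with $\omega\cdot\alpha_j=\eta_j$ and substitute to obtain
\[
\inpol{\omega}{\eta}(f)\bigl(\mm(x)\bigr)=\sum_{\beta\in B}a_\beta c^\beta\,x^{\gamma_\beta+\sum_j\alpha_j\beta_j}.
\]
The key observation is that for every $\beta\in B$ one has $\omega\cdot\bigl(\gamma_\beta+\sum_j\alpha_j\beta_j\bigr)=\ordser{\omega}\phi_\beta+\eta\cdot\beta=\ordpol{\omega}{\eta}(f)$. Because the coordinates of $\omega$ are rationally independent and each $\gamma_\beta+\sum_j\alpha_j\beta_j$ lies in $\QQ^N$, equality of the $\omega$-pairings forces these vectors to coincide with a single $\xi\in\QQ^N$. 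Hence the sum collapses to the monomial $\bigl(\sum_{\beta\in B}a_\beta c^\beta\bigr)x^\xi$.

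Specializing $x\mapsto\underline{1}$ in this identity gives $\sum_{\beta\in B}a_\beta c^\beta$, which is precisely $\inpol{\omega}{\eta}(f)(\underline{1},y)$ evaluated at $y=\mm(\underline{1})$; and this coincides with $\inpol{\omega}{\eta}(f(\underline{1},y))$ applied to $\mm(\underline{1})$, since for coefficients that are monomials in $x$ the two operations of taking the initial part and of setting $x=\underline{1}$ commute. Both sides of each stated equivalence therefore vanish if and only if $\sum_{\beta\in B}a_\beta c^\beta=0$, proving the lemma. The one nontrivial ingredient, and the only place where the argument would fail without the standing hypotheses, is the use of rational independence of the coordinates of $\omega$ to identify the $\QQ^N$-exponents $\gamma_\beta+\sum_j\alpha_j\beta_j$; without it the substituted sum would not collapse to a single monomial and the equivalence with the $x=\underline{1}$ specialization would be lost.
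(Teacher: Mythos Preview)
Your proof is correct and follows essentially the same route as the paper: write out $\inpol{\omega}{\eta}(f)$ explicitly, substitute the monomial $M$-tuple, and use the rational independence of the coordinates of $\omega$ to force all the resulting $x$-exponents to coincide, so that vanishing reduces to the scalar identity $\sum a_\beta c^\beta=0$. The paper's version indexes the initial part by pairs $(\alpha,\beta)$ while you index by $\beta$ and record the $x$-exponent as $\gamma_\beta$, but this is only a notational difference; the substance of the argument is identical.
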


\begin{proof}
    If $\ordser{\omega} (\mm)=\eta$ then $\ordser{omega}x^\alpha m^\beta=\omega\cdot\alpha+\eta\cdot\beta$.
    Since $\omega$ has rationally independent coordinates, $x^\alpha \mm^\beta = a x^\gamma$ where
    $a={\mm(\underline{1})}^\beta\in\KK$ and $\gamma$ is the unique vector in $\QQ^N$ such that $w\cdot\gamma=\omega\cdot\alpha+\eta\cdot\beta$.

    Now write
    \[
    \inpol{\omega}{\eta}(f)=\sum_{\omega\cdot\alpha+\eta\cdot\beta=
    \ordpol{\omega}{\eta}(f)} a_{\alpha ,\beta} x^\alpha y^\beta
    \]
    we have $\sum_{\omega\cdot\alpha+\eta\cdot\beta=\ordpol{\omega}{\eta}(f)} a_{\alpha ,\beta} x^\alpha \mm^\beta=0$ if and only if
    \[
    \sum_{\omega\cdot\alpha+\eta\cdot\beta=\ordpol{\omega}{\eta}(f)} a_{\alpha ,\beta} \frac{x^\alpha \mm^\beta}{x^\gamma}=0\Leftrightarrow
    \sum_{\omega\cdot\alpha+\eta\cdot\beta=\ordpol{\omega}{\eta}(f)} a_{\alpha ,\beta} {\mm(\underline{1})}^\beta=0.
    \]
\end{proof}

\section{Initial Ideals in $\SSS [y]$.}\label{Initial Ideals}

For an $M$-tuple $\eta\in {(\RR\cup\{\infty\})}^M$ we will denote by
$\Lambda (\eta )$ the set of subindexes
\[
\Lambda (\eta ):=\{ i\in\{1,\ldots ,M\}\mid \eta_i\neq\infty\}.
\]

\begin{rem}
    $\ordpol{\omega}{\eta}(f)=\infty$ if and only if $f$ is in the ideal generated by $\{ y_i\mid i\in {\Lambda (\eta )}^{\rm C}\}$.
\end{rem}

Let $\IIS$  be an ideal of ${\sl S}_\omega [y]$ and $\eta\in
{(\RR\cup\infty )}^M$. The {\bf $\eta$-initial part of $\IIS$} is
the ideal of ${\sl S}_\omega [y]$ generated by the $\eta$-initial
parts of its elements:
\[
\idinpol{\omega}{\eta} \IIS=\left< \{ \inpol{\omega}{\eta}f\mid f\in
\IIS\}\cup \{y_i\}_{i\in \Lambda(\eta)^{\rm C}}\right> .
\]

Let ${\mathcal A}$ and ${\mathcal B}$ be ideals. We have
\begin{equation}\label{inicial de la interseccion menor que
interseccion de iniciales}
    \idinpol{\omega}{\eta}\left( {\mathcal A}\cap
    {\mathcal B}\right)\subset \idinpol{\omega}{\eta} {\mathcal A}\cap
    \idinpol{\omega}{\eta}{\mathcal B}
\end{equation}
and
\begin{equation}\label{Parte inicial respeta la inclusion}
    {\mathcal A}\subset {\mathcal B}\Longrightarrow
    \idinpol{\omega}{\eta} {\mathcal A}\subset\idinpol{\omega}{\eta}
    {\mathcal B}.
\end{equation}

 Since ${\mathcal A}\cdot{\mathcal B}\subset {\mathcal A}\cap
{\mathcal B}$ then
\begin{equation}\label{inicial del producto menor que inicial de
interseccion}
    \idinpol{\omega}{\eta}\left( {\mathcal A}\cdot{\mathcal B}\right) \subset \idinpol{\omega}{\eta}\left( {\mathcal A}\cap
    {\mathcal B}\right)
\end{equation}
and, since $\inpol{\omega}{\eta} (a\cdot b)=\inpol{\omega}{\eta}
a\cdot \inpol{\omega}{\eta} b$ then
\begin{equation}\label{producto de iniciales menor que inicial del
producto}
    \idinpol{\omega}{\eta}{\mathcal A}\cdot \idinpol{\omega}{\eta}{\mathcal
    B}\subset \idinpol{\omega}{\eta}\left( {\mathcal A}\cdot{\mathcal
    B}\right).
\end{equation}

Let $A$ be an arbitrary set.
For an M-tuple $y\in A^M$ and a subset $\Lambda \subset  \{ 1,\ldots ,M\}$ we will use the following notation:
\begin{equation}\label{Quedarse solo con unas coordenadas}
y_\Lambda := (y_i)_{i\in\Lambda}.
\end{equation}
Given two subsets $B\subset A$ and $C\subset A$ the set $B^{\Lambda}\times C^{\Lambda^{\rm C}}$  is defined to be:
\[
B^{\Lambda}\times C^{\Lambda^{\rm C}}:=\{ y\in A^M\mid y_\Lambda\in B^{\#\Lambda}\,\text{and}\, y_{\Lambda^{\rm C}}\in C^{\#\Lambda^{\rm C}}\}.
\]

We will use the notation $\toro{\eta}$ for the $\# \Lambda (\eta )$-dimensional torus
\[
\toro{\eta} := {\left(\SSS^*\right)}^{\Lambda (\eta )}\times {\{
0\}}^{{\Lambda (\eta )}^{\rm C}}.
\]
\begin{rem}\label{Contenidos en cierre de toro}
    $\cerosS \left( \idinpol{\omega}{\eta}\IIS\right)\subset \overline{\toro{\eta}}$.
\end{rem}

\begin{examp}
    For a point $\varphi=(\varphi_1,\ldots ,\varphi_M)\in {{\sl S}_\omega}^M$ denote by $\idmax{\varphi}$ be the maximal ideal
    \[
    \idmax{\varphi}=\left< y_1-\varphi_1,\ldots
    ,y_M-\varphi_M\right>\subset {\sl S}_\omega [y].
    \]
    Given $\eta\in {(\RR\cup\{\infty\})}^M$ we have
    \[
    \left\{
        \begin{array}{ll}
            \idinpol{\omega}{\eta}\idmax{\varphi}={\sl S}_\omega [y]
                &\text{if}\quad \ordser{\omega}(\varphi_i)<\eta_i\quad\text{for some}\quad i\in\{1,\ldots ,M\}\\
            y_i\in\idinpol{\omega}{\eta}\idmax{\varphi}
                &\text{if}\quad \ordser{\omega}(\varphi_i)>\eta_i\\
            \idinpol{\omega}{\eta}\idmax{\varphi}=\idmax{\inser{\omega}(\varphi)}
                &\text{if}\quad \ordser{\omega}(\varphi)=\eta .
        \end{array}
    \right.
    \]
    The first two points and the inclusion $\idinpol{\omega}{\eta}\idmax{\varphi}\supset\idmax{\inser{\omega}(\varphi)}$
        in the third are direct consequence of Example \ref{inpol de un binomio}. The inclusion
    $\idinpol{\omega}{\eta}\idmax{\varphi}\subset\idmax{\inser{\omega}(\varphi)}$ in the third point is equivalent to
    $\inser{\omega}(\varphi)\in\cerosS\left( \idinpol{\omega}{\eta}\idmax{\varphi}\right)$ which follows from Lemma \ref{key}.
    And then
    \begin{equation}\label{ceros de la parte inicial de J}
    \toro{\eta}\cap \cerosS\left( \idinpol{\omega}{\eta}\idmax{\varphi}\right)=\left\{
    \begin{array}{ccc}
        \emptyset
            & \text{if}
                & \ordser{\omega}(\varphi)\neq\eta\\
        \inser{\omega}(\varphi)
            & \text{if}
                & \ordser{\omega}(\varphi)= \eta.\\
    \end{array}
    \right.
    \end{equation}
\end{examp}

\section{Zeroes of the initial ideal in $\SSS [y]$.} \label{A special case of Kapranow's Theorem}

Now we are ready to characterize the first terms of the zeroes of
the ideal $\IIS\subset {\sl
    S}_\omega [y]$.
The following is the key proposition to extend Point \ref{dos}
 of Newton-Puiseux's
    method.

\begin{prop}\label{Kapranow finito}
     Let $\IIS\subset {\sl
    S}_\omega [y]$ be an ideal with a finite number of zeroes and
    let
    $\eta$ be an $M$-tuple in ${(\RR\cup\{\infty\})}^M$.

    An element $\phi\in \toro{\eta}$ is a zero of the ideal
    $\idinpol{\omega}{\eta}\IIS$ if and only if $\ordser{\omega}(\phi) =
    \eta$ and there exists
    $\varphi\in\cerosS (\IIS )$ such that $\inser{\omega}(\varphi) =\phi$.
\end{prop}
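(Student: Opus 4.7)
My plan is to treat the two implications separately. The direction $(\Leftarrow)$ follows quickly from Lemma~\ref{key} and the description of $\toro{\eta}$: if $\varphi\in\cerosS(\IIS)$ satisfies $\ordser{\omega}(\varphi)=\eta$ and $\inser{\omega}(\varphi)=\phi$, then for $i\in\Lambda(\eta)^{\rm C}$ one has $\eta_i=\infty$, so $\varphi_i=0$ and $\phi_i=0$, while for $i\in\Lambda(\eta)$ the value $\eta_i$ is finite, forcing $\varphi_i\neq 0$ and $\phi_i=\inser{\omega}(\varphi_i)\in\SSS^*$. Hence $\phi\in\toro{\eta}$. Lemma~\ref{key} gives $\inpol{\omega}{\eta}(f)(\phi)=0$ for every $f\in\IIS$, while the generators $y_i$ with $i\in\Lambda(\eta)^{\rm C}$ of $\idinpol{\omega}{\eta}\IIS$ vanish at $\phi$ because $\phi_i=0$. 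The identity $\ordser{\omega}(\phi)=\eta$ is then automatic from $\phi=\inser{\omega}(\varphi)$.

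For $(\Rightarrow)$ I would argue by contrapositive. List $\cerosS(\IIS)=\{\varphi^{(1)},\dots,\varphi^{(s)}\}$ and assume that for every index $i$ either $\ordser{\omega}(\varphi^{(i)})\neq\eta$ or $\inser{\omega}(\varphi^{(i)})\neq\phi$. The goal is to exhibit a polynomial in $\IIS$ whose $\eta$-initial part does not vanish at $\phi$. For each $i$ I would choose a coordinate $j_i\in\{1,\dots,M\}$ making $\inpol{\omega}{\eta}(y_{j_i}-\varphi^{(i)}_{j_i})(\phi)\neq 0$, guided by the three-case description of Example~\ref{inpol de un binomio}: if some $j$ satisfies $\ordser{\omega}(\varphi^{(i)}_j)<\eta_j$, the initial factor is a nonzero monomial; if some $j$ satisfies $\ordser{\omega}(\varphi^{(i)}_j)>\eta_j$ then necessarily $\eta_j<\infty$, so $j\in\Lambda(\eta)$ and the initial factor $y_j$ evaluates to $\phi_j\neq 0$; otherwise $\ordser{\omega}(\varphi^{(i)})=\eta$, and the hypothesis combined with $\phi\in\toro{\eta}$ produces a $j\in\Lambda(\eta)$ with $\phi_j\neq\inser{\omega}(\varphi^{(i)}_j)$, so that the factor $y_j-\inser{\omega}(\varphi^{(i)}_j)$ does not vanish at $\phi$.

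I would then set $f:=\prod_{i=1}^{s}(y_{j_i}-\varphi^{(i)}_{j_i})$. The $i=k$ factor vanishes at $\varphi^{(k)}$, so $f$ vanishes on $\cerosS(\IIS)$, and the Nullstellensatz (applicable because $\SSS$ is algebraically closed) yields $f^N\in\IIS$ for some $N\geq 1$. The remaining technical ingredient is the multiplicativity
\[
\inpol{\omega}{\eta}(gh)=\inpol{\omega}{\eta}(g)\cdot\inpol{\omega}{\eta}(h)
\]
whenever both factors are nonzero, which follows because, by the rational independence of the coordinates of $\omega$, the $x$-exponent of each coefficient of $\inpol{\omega}{\eta}(g)\,\inpol{\omega}{\eta}(h)$ is uniquely determined by the $y$-multidegree, so no cancellation between distinct $y$-terms can occur and the product is nonzero in the integral domain $\SSS[y]$. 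Applied to $f^N$ this gives $\inpol{\omega}{\eta}(f^N)(\phi)=\prod_i\inpol{\omega}{\eta}(y_{j_i}-\varphi^{(i)}_{j_i})(\phi)^N\neq 0$, contradicting $\phi\in\cerosS(\idinpol{\omega}{\eta}\IIS)$.

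The main obstacle is the case analysis in the second paragraph: I must verify that every configuration of $\varphi^{(i)}$ relative to $(\eta,\phi)$ admits a useful coordinate $j_i$, and here the hypothesis $\phi\in\toro{\eta}$ is essential to exclude the otherwise troublesome possibility $j\notin\Lambda(\eta)$ with $\phi_j\neq 0$. A secondary but routine point is the multiplicativity of $\inpol{\omega}{\eta}$, whose proof depends crucially on the rational independence of $\omega$.
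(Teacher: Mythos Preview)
Your argument is correct and reaches the same conclusion, but the route differs from the paper's. The paper never builds a single witness polynomial; instead it works entirely at the level of ideals. Using the Nullstellensatz it sandwiches $\IIS$ between $\bigl(\bigcap_{\varphi\in H}\idmax{\varphi}\bigr)^k$ and $\bigcap_{\varphi\in H}\idmax{\varphi}$, applies the monotonicity and product/intersection compatibilities of $\idinpol{\omega}{\eta}$ developed in Section~\ref{Initial Ideals} (relations (\ref{inicial de la interseccion menor que interseccion de iniciales})--(\ref{producto de iniciales menor que inicial del producto})), and then reads off the zeroes in $\toro{\eta}$ from the explicit computation (\ref{ceros de la parte inicial de J}) of $\cerosS\bigl(\idinpol{\omega}{\eta}\idmax{\varphi}\bigr)\cap\toro{\eta}$ done in the preceding Example. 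Your proof, by contrast, is element-based: from each $\idmax{\varphi^{(i)}}$ you extract one linear generator $y_{j_i}-\varphi^{(i)}_{j_i}$ whose $\eta$-initial part is already nonzero at $\phi$ (this is exactly the case analysis the paper records in Example~\ref{inpol de un binomio} and the Example of Section~\ref{Initial Ideals}), multiply them, and invoke the Nullstellensatz on that single $f$. The multiplicativity $\inpol{\omega}{\eta}(gh)=\inpol{\omega}{\eta}(g)\,\inpol{\omega}{\eta}(h)$ you need is asserted in the paper just before (\ref{producto de iniciales menor que inicial del producto}), so you are not relying on anything beyond what the paper already uses. What your approach buys is concreteness and a shorter path for this particular proposition; what the paper's approach buys is a package of general facts about $\idinpol{\omega}{\eta}$ that are reused later. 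Two small remarks: your justification of multiplicativity is slightly garbled (the point is simply that $\inpol{\omega}{\eta}(g)\,\inpol{\omega}{\eta}(h)$ is nonzero in the integral domain $\SSS[y]$ and is its own initial part because every term already has the same $(\omega,\eta)$-weight with monomial $x$-coefficient), and in the $(\Leftarrow)$ direction the verifications that $\phi\in\toro{\eta}$ and $\ordser{\omega}(\phi)=\eta$ are superfluous since both are standing hypotheses.
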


\begin{proof}
    Given $\varphi=(\varphi_1,\ldots ,\varphi_M)\in {{\sl S}_\omega}^M $ consider the ideal
    \[
    \idmax{\varphi}=\left< y_1-\varphi_1,\ldots
    ,y_M-\varphi_M\right>\subset {\sl S}_\omega [y].
    \]

    Set $H:=\cerosS (\IIS )$. By hypothesis $H$ is  a finite subset of ${{\sl S}_\omega}^M$.
    By the Nullstellensatz
    there exists $k\in\NN$ such that
    \[
    {\left( \bigcap_{\varphi\in H} {\mathcal
    J}_{\varphi}\right)}^k\subset \IIS\subset
    \bigcap_{\varphi\in H} {\mathcal J}_{\varphi}.
    \]
    By (\ref{Parte inicial respeta la inclusion}) and (\ref{producto de iniciales menor que inicial del producto})
    we have
    \begin{equation}\label{Meto la parte inicial del ideal entre
    dos}
    {\left( \idinpol{\omega}{\eta}\bigcap_{\varphi\in H} {\mathcal
    J}_{\varphi}\right)}^k\subset \idinpol{\omega}{\eta}\IIS\subset
    \idinpol{\omega}{\eta}\bigcap_{\varphi\in H} {\mathcal J}_{\varphi}.
    \end{equation}
    On the other hand
    \begin{equation}\label{Entre la interseccion y el producto}
        \prod_{\varphi\in H} \idinpol{\omega}{\eta}\idmax{\varphi}
        \stackrel{(\ref{producto de iniciales menor que inicial del
    producto})+(\ref{inicial del producto menor que inicial de
    interseccion})}{\subset}
    \idinpol{\omega}{\eta}\bigcap_{\varphi\in
    H}\idmax{\varphi} \stackrel{(\ref{inicial de la interseccion menor
    que interseccion de iniciales})}{\subset}
    \bigcap_{\varphi\in
    H}\idinpol{\omega}{\eta}\idmax{\varphi}.
    \end{equation}
The zeroes of the right-hand and left-hand side of Equation (\ref{Entre la interseccion y el producto}) coincide. Therefore
    \begin{equation}
        \cerosS\left( \idinpol{\omega}{\eta}\bigcap_{\varphi\in H}\idmax{\varphi}\right)
        \stackrel{(\ref{Entre la interseccion y el producto})}{=}
        \cerosS\left( \bigcap_{\varphi\in
        H}\idinpol{\omega}{\eta}\idmax{\varphi}\right)
        =
        \bigcup_{\varphi\in
        H}\cerosS\left( \idinpol{\omega}{\eta}\idmax{\varphi}\right)
    \end{equation}
    and then, by (\ref{ceros de la parte inicial de J}),
    \begin{equation}\label{ceros de No se que poner}
        \toro{\eta}\cap
        \cerosS\left(\idinpol{\omega}{\eta}\bigcap_{\varphi\in H}
        \idmax{\varphi}\right)=\{\inser{\omega}\varphi\mid\varphi\in H,\ordser{\omega}(\varphi) =
        \eta\}.
    \end{equation}
    The conclusion follows directly from (\ref{Meto la parte inicial del ideal entre
    dos}) and (\ref{ceros de No se que poner}).
\end{proof}

\begin{cor}\label{Los ceros del inicial son monomios de orden eta}

    Let $\IIS\subset {\sl
    S}_\omega [y]$ be an ideal with a finite number of zeroes and
    let
    $\eta$ be an $M$-tuple in ${(\RR\cup\{\infty\})}^M$.

    The zeroes of the ideal
    $\idinpol{\omega}{\eta}\IIS$ in $\toro{\eta}$
    are $M$-tuples of monomials of order $\eta$.
\end{cor}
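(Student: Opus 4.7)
The plan is that this corollary is essentially a repackaging of Proposition \ref{Kapranow finito} combined with the very definition of $\inser{\omega}$ on $M$-tuples. So I would not introduce any new machinery; the proof should be a direct citation of what has just been established.

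First I would take an arbitrary element $\phi \in \toro{\eta} \cap \cerosS(\idinpol{\omega}{\eta}\IIS)$. Applying Proposition \ref{Kapranow finito} (whose hypotheses are exactly the standing hypotheses of the corollary, namely that $\IIS$ has finitely many zeroes and $\eta \in (\RR\cup\{\infty\})^M$), I obtain two things: $\ordser{\omega}(\phi) = \eta$, and there exists $\varphi = (\varphi_1,\ldots,\varphi_M) \in \cerosS(\IIS)$ such that $\phi = \inser{\omega}(\varphi)$.

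Next I would unpack the definition of the first $M$-tuple from Section \ref{The fields}: by construction
\[
\inser{\omega}(\varphi) = (\inser{\omega}(\varphi_1),\ldots,\inser{\omega}(\varphi_M)),
\]
and each $\inser{\omega}(\varphi_i)$ is, by the definition of the first term of an element of $\SSS$, either $0$ or a single term $c_i x^{\alpha_i}$ with $c_i \in \KK$ and $\omega \cdot \alpha_i = \ordser{\omega}(\varphi_i)$. Since $\phi \in \toro{\eta}$, the coordinates indexed by $\Lambda(\eta)$ are nonzero and those outside are zero; in either case each coordinate $\phi_i$ is a monomial. Finally, the condition $\ordser{\omega}(\phi) = \eta$ handed to us by the proposition says exactly that the monomial tuple has order $\eta$, concluding the proof.

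There is no substantive obstacle: the work has already been done in Proposition \ref{Kapranow finito}. The only thing to be careful about is the convention that the coordinates in $\Lambda(\eta)^{\rm C}$ are zero (treated as the trivial monomial of order $\infty$), so that the phrase ``$M$-tuple of monomials of order $\eta$'' is coherent with the definition of $\toro{\eta}$ given just before Remark \ref{Contenidos en cierre de toro}.
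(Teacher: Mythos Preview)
Your proposal is correct and matches the paper's approach exactly: the corollary is stated in the paper without proof, as an immediate consequence of Proposition~\ref{Kapranow finito}, and your argument simply makes explicit the one-line observation that $\inser{\omega}(\varphi)$ is by definition an $M$-tuple of monomials whose order equals $\ordser{\omega}(\varphi)=\eta$.
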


\section{Initial ideals in $\KK [x^*,y]$.}\label{Polynomial initial ideals}

A polynomial $f\in\KK [x^*, y]=\KK[x^*][y]$ is an expression of the form:
\[
    \sum_{(\alpha ,\beta )\in (\ZZ^N\times {\ZZ_{\geq 0}})^M} a_{(\alpha ,\beta )} x^\alpha y^\beta\qquad a_{(\alpha ,\beta )}\in\KK.
\]

 The $\eta$-order of $f\in\KK [x^*][y]$ as an element of $\SSS [y]$
is called the {\bf $(\omega ,\eta)$-order} of $f$. That is
\[
        \ordPol{\omega}{\eta} (f) :=\min_{a_{(\alpha ,\beta ) }\neq 0}
        \omega\cdot\alpha +\eta\cdot\beta.
\]
And the $\eta$-initial part of $f$ as an element of $\SSS [y]$ is
called the {\bf $(\omega ,\eta)$-initial part} $f$. That is: if
$\ordPol{\omega}{\eta}f<\infty$, then
\[
    \inPol{\omega}{\eta} (f) := \sum_{\omega\cdot\alpha +\eta\cdot\beta=\ordPol{\omega}{\eta} (f)} a_{(\alpha,\beta )} x^\alpha y^\beta
\]
 and,
 if $\ordPol{\omega}{\eta}(f)=\infty$, $\inPol{\omega}{\eta} (f)=0$.

Given an ideal ${\IIP}\subset\KK [x^*][y]$ the {\bf $(\omega ,\eta
)$-initial ideal of ${\IIP}$} is the ideal
\[
\idinPol{\omega}{\eta}{\IIP}:= \left<
\{\inPol{\omega}{\eta}(f)\mid f\in {\IIP}\}\cup \{ y_i\}_{i\in
{\Lambda (\eta )}^{\rm C}}\right>\subset \KK [x^*][y].
\]

Given an ideal ${\IIP}\subset\KK [x^*][y]$ let ${\mathcal
I}^{\rm e}$ denote the extension of ${\mathcal I}$ to $\SSS [y]$.
\begin{prop}\label{las extensiones y sin extender}
    Given $\eta\in {(\RR\cup\{\infty\})}^M$ and an ideal ${\IIP}\subset\KK [x^*,y]$ we have that
    \[
    {\left(\idinPol{\omega}{\eta} {\IIP}\right)}^{\rm e}=
    \idinpol{\omega}{\eta} {\IIP}^{\rm e}.
    \]
\end{prop}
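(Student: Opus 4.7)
The plan is to prove equality by verifying both inclusions, exploiting the observation that for any $f\in\KK[x^*,y]\subset\SSS[y]$ the two initial-part operators coincide: $\inPol{\omega}{\eta}(f)=\inpol{\omega}{\eta}(f)$, since both extract the sum of monomials of minimum weight $\omega\cdot\alpha+\eta\cdot\beta$. The inclusion $(\idinPol{\omega}{\eta}\IIP)^{\rm e}\subset\idinpol{\omega}{\eta}\IIP^{\rm e}$ is then immediate, because each natural generator $\inPol{\omega}{\eta}(f)$ (with $f\in\IIP$) of the left-hand side agrees with $\inpol{\omega}{\eta}(f)$, which is a generator of the right-hand side via the inclusion $\IIP\subset\IIP^{\rm e}$; the extra generators $\{y_i\}_{i\in\Lambda(\eta)^{\rm C}}$ appear in both.

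For the reverse inclusion, the task is to show $\inpol{\omega}{\eta}(g)\in(\idinPol{\omega}{\eta}\IIP)^{\rm e}$ for every $g\in\IIP^{\rm e}$. Writing $g=\sum_j\phi_j f_j$ with $\phi_j\in\SSS[y]$ and $f_j\in\IIP$, I would invoke the multiplicativity of $\inpol{\omega}{\eta}$, which extends to $\SSS[y]$ from item \ref{multiplication} of Remark \ref{propiedades de valser e inser} and gives $\inpol{\omega}{\eta}(\phi_j f_j)=\inpol{\omega}{\eta}(\phi_j)\cdot\inPol{\omega}{\eta}(f_j)$. When the representation has no leading cancellation, that is when $\ordpol{\omega}{\eta}(g)=\min_j\ordpol{\omega}{\eta}(\phi_j f_j)=:\rho_{\min}$, summing the initial parts of the terms achieving the minimum exhibits $\inpol{\omega}{\eta}(g)$ as an explicit $\SSS[y]$-linear combination of the $\inPol{\omega}{\eta}(f_j)$, hence as an element of $(\idinPol{\omega}{\eta}\IIP)^{\rm e}$.

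The main obstacle is the cancellation case $\ordpol{\omega}{\eta}(g)>\rho_{\min}$, in which $\sum_{j\in S_0}\inpol{\omega}{\eta}(\phi_j)\,\inPol{\omega}{\eta}(f_j)=0$ with $S_0$ indexing the minimum. The plan is to improve the representation iteratively: set $g':=\sum_{j\in S_0}\inpol{\omega}{\eta}(\phi_j)f_j\in\IIP^{\rm e}$, which by the cancellation satisfies $\ordpol{\omega}{\eta}(g')>\rho_{\min}$, and rewrite $g=g'+\sum_j\tilde\phi_j f_j$ with $\tilde\phi_j:=\phi_j-\inpol{\omega}{\eta}(\phi_j)$ for $j\in S_0$ and $\tilde\phi_j:=\phi_j$ otherwise, so that every summand of the second sum has $\ordpol{\omega}{\eta}$ strictly greater than $\rho_{\min}$. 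Applying the same procedure to $g'$ (whose valuation is already strictly greater than $\rho_{\min}$) and substituting back, one eventually reaches a representation of $g$ with $\min_j\ordpol{\omega}{\eta}(\phi_j f_j)=\ordpol{\omega}{\eta}(g)$, reducing to the no-cancellation case. Termination is controlled by the observation that $\inpol{\omega}{\eta}(g)$ is a finite sum of monomials, so after truncating each $\phi_j$ to those finitely many terms whose weight affects monomials of $g$ at weight $\ordpol{\omega}{\eta}(g)$, the cancellation analysis becomes finite and only finitely many weight-strata strictly below $\ordpol{\omega}{\eta}(g)$ must be cleared.
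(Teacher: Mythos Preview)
Your argument is correct and follows essentially the same route as the paper: both prove the nontrivial inclusion by writing $g=\sum_j\phi_j f_j$, using multiplicativity of $\inpol{\omega}{\eta}$, and in the cancellation case replacing each $\phi_j$ for $j$ in the minimizing set by $\phi_j-\inpol{\omega}{\eta}(\phi_j)$ to obtain a representation with strictly larger minimum order. Your detour through the auxiliary element $g'$ is unnecessary (the paper simply iterates on the new representation of $g$ itself), but you compensate by actually addressing termination via the truncation argument, a point the paper leaves implicit.
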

\begin{proof}
The inclusion ${\left(\idinPol{\omega}{\eta} {\IIP}\right)}^{\rm e}\subset
    \idinpol{\omega}{\eta} {\IIP}^{\rm e}$ is straightforward.

    Now, $h\in \{\inpol{\omega}{\eta}f\mid f\in{\IIP}^{\rm e}\}$ if and
    only if $h= \inpol{\omega}{\eta}(\sum_{i=1}^r g_i P_i)$ where
    $g_i\in\SSS [y]$ and $P_i\in {\IIP}$.

    Let $\Lambda =\{ i\mid
    \ordpol{\omega}{\eta}\left( g_iP_i\right)=\min_{j=1,\ldots
    r}\ordpol{\omega}{\eta}\left( g_jP_j\right)\}$. If $\sum_{i\in\Lambda} \inpol{\omega}{\eta} \left( g_i P_i\right) = 0$ then $h= \inpol{\omega}{\eta}(\sum_{i=1}^r g_i')
    P_i $ where $g_i'= g_i-\inpol{\omega}{\eta}(g_i)$ for $i\in\Lambda$
    and $g_i'=g_i$ otherwise.

    Then we can suppose that $\sum_{i\in\Lambda} \inpol{\omega}{\eta} \left( g_i P_i\right)\neq 0$. Then
     $h= \sum_{i\in\Lambda} \inpol{\omega}{\eta}  \left(g_i
    P_i\right) =\sum_{i\in\Lambda} \inpol{\omega}{\eta} (g_i )\inpol{\omega}{\eta}
    (P_i)$ is an element of ${\left(\idinPol{\omega}{\eta} {\IIP}\right)}^{\rm
    e}$.
\end{proof}

We will be using the following technical result:
\begin{lem}\label{inicial del inicial}
Given $\eta\in {(\RR\cup\{\infty\})}^M$ and an ideal ${\mathcal
    I}\subset\KK [x^*][y]$ we have that
\[
\idinPol{\omega}{\eta}( {\idinPol{\omega}{\eta} {\IIP}})
=\idinPol{\omega}{\eta} {\IIP}.
\]
\end{lem}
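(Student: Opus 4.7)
The plan is to prove both inclusions separately, with the key observation that the operation $f\mapsto\inPol{\omega}{\eta}(f)$ is idempotent on polynomials. Precisely, if $\ordPol{\omega}{\eta}(f)=d<\infty$, then by construction every monomial appearing in $\inPol{\omega}{\eta}(f)$ has the same $(\omega,\eta)$-weight $d$, so $\inPol{\omega}{\eta}(\inPol{\omega}{\eta}(f))=\inPol{\omega}{\eta}(f)$. Also, $\inPol{\omega}{\eta}(y_j)=0$ for $j\in\Lambda(\eta)^{\rm C}$, which is why the $y_j$ generators must be inserted by hand into the definition of $\idinPol{\omega}{\eta}$.

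The inclusion $\idinPol{\omega}{\eta}{\IIP}\subseteq\idinPol{\omega}{\eta}(\idinPol{\omega}{\eta}{\IIP})$ is the easy direction. It suffices to show each generator of the left-hand side lies in the right-hand side. The generators $y_j$ for $j\in\Lambda(\eta)^{\rm C}$ are explicitly included in the right-hand side. For a generator $\inPol{\omega}{\eta}(f)$ with $f\in\IIP$, the idempotence above yields $\inPol{\omega}{\eta}(f)=\inPol{\omega}{\eta}(\inPol{\omega}{\eta}(f))$, which is a generator of the right-hand side since $\inPol{\omega}{\eta}(f)\in\idinPol{\omega}{\eta}{\IIP}$.

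The reverse inclusion requires a bit more care. The $y_j$ generators with $j\in\Lambda(\eta)^{\rm C}$ are trivially in $\idinPol{\omega}{\eta}{\IIP}$. The content lies in showing that $\inPol{\omega}{\eta}(g)\in\idinPol{\omega}{\eta}{\IIP}$ for any $g\in\idinPol{\omega}{\eta}{\IIP}$. I will write $g=\sum_i h_i\,\inPol{\omega}{\eta}(f_i)+\sum_{j\in\Lambda(\eta)^{\rm C}}k_j y_j$ with $f_i\in\IIP$ and $h_i,k_j\in\KK[x^*,y]$, and then decompose each $h_i$ into its $(\omega,\eta)$-homogeneous pieces $h_i=\sum_{m}h_i^{(m)}+h_i^{(\infty)}$, where $h_i^{(m)}$ gathers the monomials of $h_i$ of finite $(\omega,\eta)$-weight $m$, and $h_i^{(\infty)}$ gathers those monomials divisible by some $y_j$ with $j\in\Lambda(\eta)^{\rm C}$.

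Since each $\inPol{\omega}{\eta}(f_i)$ is $(\omega,\eta)$-homogeneous of finite weight $d_i:=\ordPol{\omega}{\eta}(f_i)$, the product $h_i^{(m)}\,\inPol{\omega}{\eta}(f_i)$ is $(\omega,\eta)$-homogeneous of weight $m+d_i$, while $h_i^{(\infty)}\,\inPol{\omega}{\eta}(f_i)$ and the terms $k_j y_j$ lie entirely in monomials of infinite weight and contribute nothing to any finite-weight component. Setting $d:=\ordPol{\omega}{\eta}(g)$ and extracting the weight-$d$ part of the expression for $g$ gives the explicit identity
\[
\inPol{\omega}{\eta}(g)=\sum_{i}h_i^{(d-d_i)}\,\inPol{\omega}{\eta}(f_i),
\]
which exhibits $\inPol{\omega}{\eta}(g)$ as an element of $\idinPol{\omega}{\eta}{\IIP}$ (the case $d=\infty$ just means $\inPol{\omega}{\eta}(g)=0$). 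The main technical nuisance, rather than a deep obstacle, is keeping the infinite-weight contributions bookkept correctly; treating $\eta_j=\infty$ as assigning weight $\infty$ to any monomial divisible by $y_j$ makes this bookkeeping routine.
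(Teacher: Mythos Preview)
Your proof is correct and follows essentially the same strategy as the paper's: both arguments rest on the fact that $\inPol{\omega}{\eta}(f)$ is $(\omega,\eta)$-homogeneous, so one can extract the minimal-weight homogeneous component of any $g\in\idinPol{\omega}{\eta}\IIP$ and recognize it as a combination of generators. The only packaging difference is that the paper first absorbs each monomial of the multiplier $p$ into $\IIP$ via $a x^{\alpha}y^{\beta}\,\inPol{\omega}{\eta}(h)=\inPol{\omega}{\eta}(a x^{\alpha}y^{\beta}h)$, thereby writing $g$ as a sum $\sum_i\inPol{\omega}{\eta}(f_i)$ with $f_i\in\IIP$ and then combining the minimal-order terms into a \emph{single} $f\in\IIP$ with $\inPol{\omega}{\eta}(g)=\inPol{\omega}{\eta}(f)$; you instead keep the multipliers $h_i$ outside and decompose them by weight, obtaining $\inPol{\omega}{\eta}(g)=\sum_i h_i^{(d-d_i)}\inPol{\omega}{\eta}(f_i)$. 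Your version is slightly more explicit about the infinite-weight bookkeeping coming from the $y_j$ with $j\in\Lambda(\eta)^{\rm C}$, which the paper glosses over; the paper's version yields the marginally stronger statement that every $\inPol{\omega}{\eta}(g)$ is already the initial part of one element of $\IIP$, though this is not needed for the lemma.
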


\begin{proof}It is enough to see that for any $g\in{\idinPol{\omega}{\eta} {\IIP}}$ there exists $f\in \IIP$ such that
${\idinPol{\omega}{\eta} {g}}={\idinPol{\omega}{\eta} (f)}$:

Given $p=\sum_{i=1}^d a_i x^{\alpha_i}y^{\beta_i}\in\KK [x^*][y]$ and $h\in {\IIP}$, we have
\[
p\idinPol{\omega}{\eta}(h)= \sum_{i=1}^d a_i x^{\alpha_i}y^{\beta_i}
\idinPol{\omega}{\eta}(h)=\sum_{i=1}^d\idinPol{\omega}{\eta} (a_i x^{\alpha_i}y^{\beta_i}h).
\]
Then the product $p\idinPol{\omega}{\eta}(h)$ is a sum of $({\omega},{\eta})$-initial parts of elements of ${\IIP}$.

Therefore, $g\in\idinPol{\omega}{\eta}{\IIP}$ if and only if there exists
$f_1,\ldots ,f_r\in {\IIP}$, such that
$g=\sum_{i\in\{ 1,\ldots , r\}} \idinPol{\omega}{\eta}(f_i)$. The $f_i$'s may be chosen such that
$\sum_{i\in\Lambda}\idinPol{\omega}{\eta}(f_i)\neq 0$ for all non-empty $\Lambda\subset\{ 1,\ldots , r\}$. Let
$m=\min_{i\in\{ 1,\ldots , r\}} \ordPol{\omega}{\eta}(f_i)$. Since $\sum_{\ordPol{\omega}{\eta} (f_i)=m}\idinPol{\omega}{\eta} (f_i)\neq 0$ then
$\idinPol{\omega}{\eta} (g)=\sum_{\ordPol{\omega}{\eta} (f_i)=m} \idinPol{\omega}{\eta} (f_i)$, and then $f:=\sum_{\ordPol{\omega}{\eta} (f_i)=m}f_i$
has the property we were looking for.
\end{proof}


\begin{prop}\label{primera traduccion del teorema}
    Let
     $\omega\in\RR^N$ be of rationally independent coordinates,
     let
    $\eta$ be an $M$-tuple in ${(\RR\cup\{\infty\})}^M$ and let $\IIP\subset \KK
    [x^*,y]$ be an N-admisible ideal.

    An element $\phi\in \toro{\eta}$ is an $\omega$-solution for the ideal
    $\idinPol{\omega}{\eta}\IIP$ if and only if $\ordser{\omega}(\phi) =
    \eta$ and there exists $\varphi\in {\SSS}^M$, an $\omega$-solution for $\IIP$,
    such that $\inser{\omega}(\varphi) =\phi$.
\end{prop}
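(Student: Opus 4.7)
The plan is to reduce the statement to Proposition \ref{Kapranow finito} (the finite-zero Kapranov version in $\SSS[y]$) via the extension-of-scalars identity in Proposition \ref{las extensiones y sin extender}. The reformulation of Question \ref{problema} given in Section \ref{The extended ideal} tells us that, for any N-admissible ideal $\mathcal{J}\subset\KK[x^*,y]$, the set of $\omega$-solutions of $\mathcal{J}$ is exactly $\cerosS(\mathcal{J}^{\rm e})$. Applying this observation to both $\IIP$ and $\idinPol{\omega}{\eta}\IIP$ converts the proposition into an equivalence about zero sets of ideals inside $\SSS[y]$.

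First I would invoke Proposition \ref{las extensiones y sin extender} to rewrite $(\idinPol{\omega}{\eta}\IIP)^{\rm e}=\idinpol{\omega}{\eta}(\IIP^{\rm e})$. After this substitution, the target statement reads: $\phi\in\toro{\eta}$ is a zero of $\idinpol{\omega}{\eta}(\IIP^{\rm e})$ if and only if $\ordser{\omega}(\phi)=\eta$ and there exists $\varphi\in\cerosS(\IIP^{\rm e})$ with $\inser{\omega}(\varphi)=\phi$. This is exactly the conclusion of Proposition \ref{Kapranow finito} applied with $\IIS:=\IIP^{\rm e}$, so provided the hypothesis of that proposition holds, we are done.

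The one technical point to check, and the main obstacle, is the hypothesis of Proposition \ref{Kapranow finito} that the ideal in $\SSS[y]$ has only a finite number of zeros. Section \ref{The extended ideal} only asserts discreteness of $\cerosS(\IIP^{\rm e})$, so the upgrade to finiteness is precisely where the N-admissibility of $\IIP$ is used. The dominance and finite-generic-fiber conditions on the projection (\ref{la proyeccion}) make $\KK(x)[y]/(\IIP\cap\KK(x)[y])$ a finite-dimensional $\KK(x^*)$-algebra; since each $\omega$-solution $\varphi\in\cerosS(\IIP^{\rm e})$ determines a distinct $\KK(x^*)$-algebra homomorphism $y_j\mapsto\varphi_j$ into $\SSS$, the number of such solutions is bounded by this dimension and is therefore finite. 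Once this is in place, Proposition \ref{Kapranow finito} closes the argument immediately; everything else in the proof is bookkeeping between the two incarnations of initial ideal.
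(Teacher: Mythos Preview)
Your proposal is correct and follows exactly the paper's approach: the paper's proof is the single sentence ``This is a direct consequence of Proposition~\ref{las extensiones y sin extender} and Proposition~\ref{Kapranow finito},'' and your argument is precisely the unpacking of that sentence. Your explicit verification that $\cerosS(\IIP^{\rm e})$ is finite (not merely discrete, as Section~\ref{The extended ideal} states) is a detail the paper leaves implicit, but your justification via the finite-dimensionality of the generic fiber is the intended one.
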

\begin{proof}
This is a direct consequence of Proposition \ref{las extensiones y sin
extender} and Proposition \ref{Kapranow finito}.
\end{proof}

\section{The tropical variety.}\label{Tropicalization}

The tropical variety of a polynomial $f\in \KK [x^*, y]$ is the
$(N+M-1)$-skeleton of the normal fan of its Newton polyhedron. The
tropical variety of an ideal ${\IIP}\subset \KK [x^*, y]$ is
the intersection of the tropical varieties of the elements of
${\IIP}$.

More precisely, the {\bf tropical variety} of ${\IIP}$ is the set
\[
\tau ({\IIP}):= \{(\omega ,\eta )\in\RR^N\times
{(\RR\cup\{\infty\})}^M\mid \idinPol{\omega}{\eta}{\IIP}\cap
\KK [x^*,y_{\Lambda (\eta )}]\text{ does not have a monomial}\}.
\]

  Tropical varieties have become an important tool for solving problems in algebraic geometry.
See for example
\cite{ItenbergMikhalkin:2007,Gathmann:2006,RichterSturmfels:2005}.
In \cite{Bogart:2007,HeptTheobald:2007} algorithms to compute tropical varieties are
described.

\begin{prop}\label{proposicion Tropicalizacion}
    Let ${\IIP}$ be an ideal of $\KK [x^*,y]$. Given
    $\eta \in {(\RR\cup\{\infty\})}^{M}$ the ideal
    $\idinPol{\omega}{\eta}{\IIP}$ has an $\omega$-solution in $\toro{\eta}$
    if and only if $(\omega ,\eta)$ is in the tropical variety of ${\IIP}$.
\end{prop}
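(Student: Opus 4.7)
The plan is to recast the tropical condition as a Nullstellensatz statement on the algebraic torus over the algebraically closed field $\SSS$. Write $\Lambda:=\Lambda(\eta)$ and $J:=\idinPol{\omega}{\eta}\IIP$. Since $y_i\in J$ for every $i\notin\Lambda$, any zero of $J$ in $\toro{\eta}$ automatically has vanishing coordinates outside $\Lambda$, and the quotient map $\pi\colon\KK[x^*,y]\to\KK[x^*,y_\Lambda]$ sending $y_i\mapsto 0$ for $i\notin\Lambda$ identifies the remaining condition with vanishing on $\bar J:=\pi(J)=J\cap\KK[x^*,y_\Lambda]$. Thus an $\omega$-solution in $\toro{\eta}$ exists iff $\bar J$ admits a common zero in $(\SSS^*)^{\Lambda}$, while by definition $(\omega,\eta)\in\tau(\IIP)$ is the statement that $\bar J$ contains no $\KK$-monomial $cx^\alpha y_\Lambda^\gamma$.

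The implication ``solution exists $\Rightarrow (\omega,\eta)\in\tau(\IIP)$'' is immediate: evaluating a putative monomial $cx^\alpha y_\Lambda^\gamma\in\bar J$ at any $\phi\in\toro{\eta}$ yields $cx^\alpha\phi^\gamma\in\SSS^*$, forbidding a common zero. For the converse, I invoke the Nullstellensatz in the Laurent polynomial ring $\SSS[y_\Lambda^{\pm 1}]$ over the algebraically closed field $\SSS$: $\bar J$ has a zero in $(\SSS^*)^{\Lambda}$ iff $\bar J\cdot\SSS[y_\Lambda^{\pm 1}]$ is proper, and otherwise multiplying a relation $1=\sum g_if_i$ through by a sufficiently large $y^\beta$ produces $y^\beta\in\bar J\cdot\SSS[y_\Lambda]$ for some $\beta\in\NN^\Lambda$. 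So the task reduces to showing that such a Puiseux relation contradicts $(\omega,\eta)\in\tau(\IIP)$.

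For the descent, write $y^\beta=\sum_i g_if_i$ with finitely many $f_i\in\bar J\subset\KK[x^*,y_\Lambda]$ and $g_i\in\SSS[y_\Lambda]$ of bounded $y_\Lambda$-degree; comparing $y_\Lambda$-coefficients yields a finite linear system over $\KK(x)$ whose unknowns (the $\SSS$-coefficients of the $g_i$) are solvable over $\SSS$. Because $\KK(x)\subset\SSS$ is a field extension, ranks are preserved, so such a system is already solvable over $\KK(x)$, yielding $y^\beta\in\bar J\cdot\KK(x)[y_\Lambda]$. Clearing denominators produces a nonzero $d(x)\in\KK[x^*]$ with $d(x)y^\beta\in\bar J$.

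To conclude, Lemma \ref{inicial del inicial} gives $\idinPol{\omega}{\eta}J=J$, so $J$ is self-initial; iteratively peeling off $(\omega,\eta)$-initial parts shows that every $(\omega,\eta)$-homogeneous component of an element of $J$ lies again in $J$, and this grading descends to $\bar J$. Since $\omega$ has rationally independent coordinates, the monomials $d_\alpha x^\alpha y^\beta$ appearing in $d(x)y^\beta$ have pairwise distinct $(\omega,\eta)$-weights and are therefore its separate homogeneous components; each one lies in $\bar J$, and any $\alpha$ with $d_\alpha\neq 0$ yields a genuine $\KK$-monomial of $\bar J$, contradicting $(\omega,\eta)\in\tau(\IIP)$. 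The main obstacle is this final descent-plus-homogeneity step: the field extension $\KK(x)\subset\SSS$ only brings the Puiseux relation back to $\KK(x)[y_\Lambda]$ (and not to $\KK[x^*,y_\Lambda]$), so one must combine Lemma \ref{inicial del inicial} with the rational independence of $\omega$ to extract an actual $\KK$-monomial lying in $\bar J$.
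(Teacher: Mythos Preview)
Your argument is correct and follows the same core strategy as the paper: show by contrapositive that the absence of a zero in $\toro{\eta}$ forces, via the Nullstellensatz and clearing denominators, an element of the form $d(x)y^\beta$ into the initial ideal, and then invoke Lemma~\ref{inicial del inicial} to extract an honest $\KK$-monomial. The differences are only tactical. First, the paper applies the Nullstellensatz directly in $\KK(x)[y]$ (using that $\SSS$ contains the algebraic closure of $\KK(x)$ together with Remark~\ref{Contenidos en cierre de toro}), which lands immediately in $\KK(x)[y]$ and avoids your linear-algebra descent from $\SSS$ back to $\KK(x)$. Second, at the end the paper takes a single $(\omega,\eta)$-initial part of $d(x)y^{kv}$: since $\omega$ has rationally independent coordinates, $\inser{\omega}(d(x))$ is already a single term, so one application of Lemma~\ref{inicial del inicial} suffices, whereas you iterate the lemma to extract every homogeneous component. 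Your route is slightly longer but perfectly sound; the paper's is a little more economical.
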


\begin{proof}
    Suppose that $\varphi\in \toro{\eta}$ is an $\omega$-solution of $\idinPol{\omega}{\eta}{\IIP}$
    and that $c x^\alpha y^\beta\in \idinPol{\omega}{\eta}{\IIP}\cap \KK [x^*,y_{\Lambda (\eta )}]$. We have
    $x^\alpha\varphi^\beta=0$ and then, $\varphi_i=0$ for some $i\in\Lambda (\eta )$ which
    gives a contradiction.

    Let $\KK (x)$ denote the field of fractions of $\KK [x]$ and let
    $\widetilde{{\IIP}}$ be the extension of $\idinPol{\omega}{\eta}{\IIP}$ to $\KK (x)[y]$
    via the natural inclusion
    $ \KK [x,y]=\KK [x][y]\subset \KK (x)[y]$.

    Since $\SSS$ contains the algebraic closure of $\KK
    (x)$, the zeroes of $\idinPol{\omega}{\eta}{\IIP}$ are the algebraic zeroes
    of $\tilde{{\IIP}}$.  Suppose that $\idinPol{\omega}{\eta}{\IIP}$ does not have
    zeroes in $\toro{\eta}$ then, by Remark \ref{Contenidos en cierre de toro}
    \[
        \cerosS\left(\idinPol{\omega}{\eta}{\IIP}\right)\subset \overline{\toro{\eta}}\setminus\toro{\eta}.
    \]

    Let $v$ be the only element of ${\{ 1\} }^{\Lambda (\eta )}\times {\{ 0\} }^{{\Lambda (\eta )}^{\rm C}}$.
    The monomial $y^v$
    vanishes in all the algebraic zeroes
    of $\tilde{{\IIP}}$. By the Nullstellensatz, there exists $k\in\NN$ such that $y^{kv}$ belongs to $\tilde{{\IIP}}$.

    Now  $y^{kv}$ belongs to $\tilde{{\IIP}}$ if and only if there exists
    $h_1,\dots ,h_r\in \KK [x]\setminus\{ 0\}$ and $f_1,\dots ,f_r\in
    \idinPol{\omega}{\eta}{\IIP}$ such that
    \[
    y^{kv}=\sum_{i=1}^r \frac{1}{h_i}f_i
    \Rightarrow \left(\prod_{i=1}^r h_i(x)\right) y^{kv} =
    \sum_{i=1}^r \left(\prod_{\begin{array}{c}j=1\\i\neq j\end{array}}^r h_j\right) f_i\in
    \idinPol{\omega}{\eta}{\IIP}.
    \]
    Then, by Lemma \ref{inicial del inicial}, $\inPol{\omega}{\eta} \left(\left(\prod_{i=1}^r h_i(x)\right)
    y^{kv}\right)\in\idinPol{\omega}{\eta}{\IIP}$ and $\inPol{\omega}{\eta} \left(\left(\prod_{i=1}^r h_i(x)\right)
    y^{kv}\right)= \inser{\omega}\left(\prod_{i=1}^r h_i(x)\right)y^{kv}$ is a monomial.
    And the result is proved.
\end{proof}

As a direct consequence of of Propositions \ref{primera traduccion del
teorema} and \ref{proposicion Tropicalizacion} we have the extension Point 1 of Newton-Puiseux's
    method.

\begin{thm}\label{Extension del punto uno}
     Let $\IIP\subset \KK [x^*,y]$ a N-admissible ideal and let
     $\omega\in\RR^N$ be of rationally independent coordinates.

    $\phi= (c_1x^{\alpha^{(1)}},\ldots ,c_M x^{\alpha^{(M)}})$ is the first term of
    an
    $\omega$-solution of $\IIP$ if and only if
    \begin{itemize}
        \item
        $(\omega ,\ordser\omega\phi)$ is in the tropical variety of $\IIP$.

        \item $\phi$ is an $\omega$-solution of the ideal $\idinPol{\omega}{\ordser\omega\phi}
        \IIP$.

    \end{itemize}
\end{thm}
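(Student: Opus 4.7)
The plan is to read off the statement as a direct packaging of Propositions \ref{primera traduccion del teorema} and \ref{proposicion Tropicalizacion}. Set $\eta := \ordser{\omega}\phi$ once and for all. The first bookkeeping step is to verify that $\phi$ lies in $\toro{\eta}$: indeed $\eta_i = \omega\cdot\alpha^{(i)}$ when $c_i\neq 0$ and $\eta_i = \infty$ exactly when $c_i=0$ (since $\ordser{\omega}(0)=\infty$), which matches the definition $\toro{\eta}= (\SSS^*)^{\Lambda(\eta)}\times\{0\}^{\Lambda(\eta)^{\mathrm C}}$. With this normalization in place the two propositions can be glued together.

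For the forward implication, suppose $\phi$ is the first term of some $\omega$-solution $\varphi\in\SSS^M$ of $\IIP$. Then $\inser{\omega}\varphi = \phi$ and $\ordser{\omega}\varphi = \eta$, so Proposition \ref{primera traduccion del teorema} (in the ``there exists $\varphi$'' direction) tells us that $\phi$ is an $\omega$-solution of $\idinPol{\omega}{\eta}\IIP$ lying in $\toro{\eta}$; this is exactly the second bullet. Since $\idinPol{\omega}{\eta}\IIP$ now has an $\omega$-solution in $\toro{\eta}$, Proposition \ref{proposicion Tropicalizacion} yields $(\omega,\eta)\in\tau(\IIP)$, which is the first bullet.

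For the backward implication, the second bullet does all the work. Assuming $\phi\in\toro{\eta}$ is an $\omega$-solution of $\idinPol{\omega}{\eta}\IIP$, Proposition \ref{primera traduccion del teorema} (in the converse direction) produces $\varphi\in\SSS^M$, an $\omega$-solution of $\IIP$, with $\inser{\omega}\varphi = \phi$, so $\phi$ is the first term of $\varphi$. Note that the first bullet is not literally invoked here; it is automatic from the second bullet via Proposition \ref{proposicion Tropicalizacion}, which is why the two bullets are equivalent and both appear in the statement.

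The genuinely new content lies in Propositions \ref{primera traduccion del teorema} and \ref{proposicion Tropicalizacion}, whose proofs rely on Lemma \ref{key}, the Nullstellensatz argument, and the behaviour of initial ideals under intersection and product; the present theorem is essentially a repackaging. Consequently, the only obstacle worth flagging is the one above: making sure the value $\eta = \ordser{\omega}\phi$ assigns $\infty$ precisely to the coordinates where $c_i=0$, so that $\phi$ sits inside the torus $\toro{\eta}$ where both propositions are phrased.
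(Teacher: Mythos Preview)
Your proof is correct and matches the paper's approach exactly: the paper simply declares the theorem to be ``a direct consequence of Propositions \ref{primera traduccion del teorema} and \ref{proposicion Tropicalizacion}'', and what you have written is a careful unpacking of that sentence, including the necessary bookkeeping that $\phi\in\toro{\eta}$ for $\eta=\ordser{\omega}\phi$. Your observation that the first bullet is redundant given the second (via Proposition \ref{proposicion Tropicalizacion}) is also accurate, though the paper does not comment on this.
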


These statements recall Kapranov's theorem. Kapranov's theorem was
proved for hypersurfaces in \cite{EinsiedlerKapranov:2006} and the
first published proof for an arbitrary ideal may be found in
\cite{Draisma:2008}. There are several constructive proofs
\cite{Payne:2009,Katz:2009} in the literature. An other proof of
Proposition \ref{Kapranow finito} could probably be done by using
Proposition \ref{las extensiones y sin extender}, showing that
$(\omega ,\eta)\in {\mathcal T} ({\IIP})$ if and only if $\eta\in
{\mathcal T} ({\IIP}^e)$, and checking each step of one of the
constructive proofs.

\section{$\omega$-set.}\label{omega-data}

At this stage we need to introduce some more notation: Given a
$M\times N$ matrix
\[
\Gamma=\left(\!\!\begin{array}{ccc}
        \tiny\Gamma_{1,1}
            & \ldots
                & \tiny\Gamma_{1,N}\\
        \vdots
            &   & \vdots\\
        \tiny\Gamma_{M,1}
            & \ldots
                & \tiny\Gamma_{M,N}
    \end{array}\!\!\right).
\]
The $i$-th row will be denoted by $\Gamma_{i,*}:=
(\Gamma_{i,1},\ldots ,\Gamma_{i,N})$ and

\[
x^\Gamma :=\left(\!\!\begin{array}{c} x^{\tiny\Gamma_{1,*}}\\ \vdots
\\ x^{\tiny\Gamma_{M,*}}
\end{array}\!\!\right).
\]
In particular, if $I\in {\mathcal M}_{N\times N}$ is the identity,
then $x^{\frac{1}{k}I}=({x_1}^{\frac{1}{k}},\ldots
,{x_N}^{\frac{1}{k}})$.

An $M$-tuple of monomials $\mm\in {\KK ({x^{\frac{1}{K}I}})}^M$ can
be written as an entrywise product
\[
 \mm=x^{\Gamma}c=\left(\begin{array}{c}
        c_1 x^{\Gamma_{1,*}}\\
        \vdots\\
        c_M x^{\Gamma_{M,*}}
    \end{array}\right).
\]

Given an $M$-tuple of monomials $\mm\in {\KK
({x^{\frac{1}{K}I}})}^M$ the {\bf defining data of $\mm$} is the
 $3$-tuple
 \[
 D(\mm)=\{\ordser{\omega}\mm ,\Gamma ,\mm(\underline{1})\}
 \]
 where $\Gamma\in {\mathcal M}_{M\times N}(\QQ\cup\{\infty\})$ is the unique matrix such that $\omega\cdot\Gamma^T=\ordser{\omega}{\bf
 m}$ and $\Gamma_{i,*}=\underline{\infty}$ for all $i\in {\Lambda (\ordser{\omega}{\bf
 m})}^{\rm C}$.

\begin{examp} If $\omega = (1,\sqrt{2})$ and
\[
\mm = \left(\begin{array}{c}
         3{x_1}^{3}\\
         7{x_1}^{2}{x_2}\\
         0
    \end{array}\right)
\]
then
\[
    D(\mm) =\{ (3,2+\sqrt{2},\infty ),\left(\begin{array}{cc}
            3 & 0\\
            2 & 1\\
            \infty & \infty
        \end{array}\right), (3,7,0)\}.
    \]
\end{examp}

\begin{defin}
 An {\bf $\omega$-set} is a $3$-tuple $\{ \eta,\Gamma,c\}$ where
\begin{equation}\label{Donde estan los elementos de un starting data}
 \eta\in (\mathbb{R}\cup\{\infty\})^M,\,\,\Gamma\in
 {\mathcal M}_{M\times N}(\mathbb{Q}\cup \{\infty\}),\, c\in
 \KK^M
\end{equation}
and
\begin{itemize}
    \item
    $\omega\cdot\Gamma^T=\eta$
    \item
    $\Gamma_{i,*}=\underline{\infty}$ for all $i\in {\Lambda (\eta
    )}^{\rm C}$
    \item
    $c\in
    {\KK^*}^{\Lambda (\eta)}\times {\{ 0\}}^{{\Lambda (\eta)}^{\rm
    C}}$.
\end{itemize}
\end{defin}

 Given  an $\omega$-set $D=\{\eta,\Gamma, c\}$ the {\bf M-tuple defined by $D$} is the M-tuple of monomials
\[
\MM_D: = x^{\Gamma}c.
\]
We have
\[
\MM_{\{\eta ,\Gamma
    ,c\}}(x^{rI})=\MM_{\{r\eta ,r\Gamma
    ,c\}}(x).
\]
\begin{rem}
$\mm=\MM_{D(\mm)}$ and $D(\MM_D)=D$.
\end{rem}

\section{Starting $\omega$-set for $\mathcal{I}$.}\label{omega-starting data}

Given an N-admissible ideal ${\IIP}\subset \KK[x^*,y]$.  {\bf
A starting $\omega$-set for ${\IIP}$} is  an $\omega$-set $D=\{
\eta,\Gamma,c\}$ such that
\begin{itemize}
    \item  The vector $(\omega,\eta)$ is in the tropical variety of ${\IIP}$.
    \item $c$ is a zero of the system $\{f(\underline{1},y)=0\mid f\in \text{In}_{\omega,\eta}\IIP\}$.
\end{itemize}

\begin{examp}
    Let ${\IIP}=\left< x_1+y_1-y_2+y_1y_2+y_3,x_2-y_1+y_2+2y_1 y_2, y_3\right>$.
    For $\omega =(1,\sqrt{2})$ there are two possible starting $\omega$-sets
    \[
    D1=\{ (1,1,\infty ),\left(\begin{array}{cc}
            1 & 0\\
            1 & 0\\
            \infty & \infty
        \end{array}\right), (1,1,0)\}
    \]
    and
    \[
    D2=\{ (0,0,\infty ),\left(\begin{array}{cc}
            0 & 0\\
            0 & 0\\
            \infty & \infty
        \end{array}\right), (\frac{1}{3},\frac{1}{5},0)\}.
    \]
and
    \[
    \MM_{D1}(x)=\left(\begin{array}{c}
         x_1\\
         x_1\\
         0
    \end{array}\right),\,
    \MM_{D1} (x^{\frac{1}{3}I})=\left(\begin{array}{c}
         {x_1}^{\frac{1}{3}}\\
         {x_1}^{\frac{1}{3}}\\
         0
    \end{array}\right)\,\text{and}
    \,\MM_{D2}(x)=\left(\begin{array}{c}
         \frac{1}{3}\\
         \frac{1}{5}\\
         0
    \end{array}\right).
    \]
\end{examp}

\begin{prop}\label{Mupla asociada a data}
 The $\omega$-set $D=\{\eta ,\Gamma ,c\}$ is  a starting $\omega$-set for ${\IIP}$ if and only if
  $\MM_D$ is an $\omega$-solution of $\idinPol{\omega}{\eta} {\IIP}$.

  Moreover all the $\omega$-solutions of
 $\idinPol{\omega}{\eta} {\IIP}$ in $\toro{\eta}$ are of the form $\MM_D$ where $D=\{\eta ,\Gamma ,c\}$ is  a starting $\omega$-set for ${\IIP}$.
\end{prop}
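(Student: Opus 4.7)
The plan is to chain the dictionaries established in Lemma \ref{Sistema de coeficiente}, Proposition \ref{proposicion Tropicalizacion}, Proposition \ref{las extensiones y sin extender} and Corollary \ref{Los ceros del inicial son monomios de orden eta}, handling the ``iff'' and the ``moreover'' clause separately.

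For the equivalence, I would first observe that the structural constraints built into any $\omega$-set $D=\{\eta,\Gamma,c\}$ already force $\MM_D=x^\Gamma c\in \toro{\eta}$ with $\ordser{\omega}\MM_D=\omega\cdot\Gamma^T=\eta$, and guarantee that $\MM_D$ kills the ``extra'' generators $\{y_i\}_{i\in\Lambda(\eta)^{\rm C}}$ of $\idinPol{\omega}{\eta}\IIP$. Consequently $\MM_D$ is an $\omega$-solution of $\idinPol{\omega}{\eta}\IIP$ exactly when it annihilates every remaining generator $\inPol{\omega}{\eta}(h)$ with $h\in\IIP$. Applying Lemma \ref{Sistema de coeficiente} to each such generator (the hypothesis $\ordser{\omega}\MM_D=\eta$ is precisely what it needs), this vanishing is equivalent to $c=\MM_D(\underline{1})$ being a zero of $\inPol{\omega}{\eta}(h)(\underline{1},y)$ for every $h\in\IIP$. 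Since the evaluation map $g\mapsto g(\underline{1},y)$ is a ring homomorphism, it carries a generating set of $\idinPol{\omega}{\eta}\IIP$ to a generating set of $\{f(\underline{1},y)\mid f\in\idinPol{\omega}{\eta}\IIP\}$; so the condition is precisely the second bullet of ``starting $\omega$-set''. The first bullet $(\omega,\eta)\in\tau(\IIP)$ is then forced by Proposition \ref{proposicion Tropicalizacion}, using $\MM_D\in\toro{\eta}$ as the required witness.

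For the ``moreover'' clause, I would let $\varphi\in\toro{\eta}$ be any $\omega$-solution of $\idinPol{\omega}{\eta}\IIP$. By Proposition \ref{las extensiones y sin extender}, $\varphi$ is also a zero of $\idinpol{\omega}{\eta}\IIP^{\rm e}$, and N-admissibility of $\IIP$ together with the algebraic closedness of $\SSS$ ensures that $\IIP^{\rm e}\subset\SSS[y]$ has finitely many zeros in $\SSS^M$. Corollary \ref{Los ceros del inicial son monomios de orden eta} therefore forces $\varphi$ to be an $M$-tuple of monomials of order $\eta$. Writing $\varphi=x^\Gamma c$, with $c=\varphi(\underline{1})$ and $\Gamma$ the unique matrix (using the rational independence of the coordinates of $\omega$) satisfying $\omega\cdot\Gamma^T=\eta$ with $\Gamma_{i,*}=\underline{\infty}$ for $i\in\Lambda(\eta)^{\rm C}$, the triple $D=\{\eta,\Gamma,c\}$ is an $\omega$-set with $\MM_D=\varphi$, and by the first half $D$ is a starting $\omega$-set.

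The main subtlety I expect is bookkeeping in the equivalence: Lemma \ref{Sistema de coeficiente} is stated for a single polynomial, so I must argue carefully that both sides of the desired equivalence (``$\MM_D$ is a zero of the whole ideal'' and ``$c$ is a common zero of the whole evaluated system'') reduce cleanly to the chosen generators of $\idinPol{\omega}{\eta}\IIP$. The reduction uses exactly the ring-homomorphism nature of $g\mapsto g(\underline{1},y)$ and the fact that an element of $\SSS^M$ annihilates an ideal in $\KK[x^*,y]$ iff it annihilates a generating set.
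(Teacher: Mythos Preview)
Your proof is correct and follows essentially the same route as the paper's: Lemma~\ref{Sistema de coeficiente} for the forward direction, Lemma~\ref{Sistema de coeficiente} together with Proposition~\ref{proposicion Tropicalizacion} for the converse, and Corollary~\ref{Los ceros del inicial son monomios de orden eta} for the ``moreover'' clause. Your explicit invocation of Proposition~\ref{las extensiones y sin extender} (to transfer the problem to $\SSS[y]$ before applying the corollary) and of N-admissibility (to guarantee finitely many zeroes of $\IIP^{\rm e}$) is a welcome clarification the paper leaves implicit.
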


\begin{proof}
That $\MM_D$ is an $\omega$-solution of $\idinPol{\omega}{\eta}
{\IIP}$ when $D$ is  a starting $\omega$-set is a direct consequence
of Lemma \ref{Sistema de coeficiente}. The other implication is
consequence of Proposition \ref{proposicion Tropicalizacion} and
Lemma \ref{Sistema de coeficiente}.

The last sentence follows from Corollary \ref{Los ceros del inicial
son monomios de orden eta}.
\end{proof}

\section{The ideal ${\IIP}_D$.}\label{The ideal ID}

Given a matrix $\Gamma\in {\mathcal M}_{M\times
N}(\QQ\cup\{\infty\})$  the minimum common multiple of the
denominators of its entries will be denoted by $\den\Gamma$. That is
\[
\den\Gamma := \min\{ k\in\NN\mid \Gamma\in {\mathcal M}_{N\times
M}(\mathbb{Z}\cup\{\infty\})\}.
\]

Given  an $\omega$-set $D=\{\eta,\Gamma, c\}$, we will denote by
${\IIP}_D$
 the ideal in $\mathbb{K}[x^*,y]$ given by
\[
{\IIP}_D:= \left<\{f(x^{\den\Gamma I},y+\MM_D(x^{\den\Gamma I})\mid
f\in \mathcal{I}\}\right>\subset\KK [x^*,y].
\]

\begin{rem}\label{ceros de I y de ID} A series $\phi\in\SSS^M$ is an $\omega$-solution of ${\IIP}$ if and only if the series
$\tilde{\phi}:= \phi(x^{\den\Gamma I})- \MM_D(x^{\den\Gamma I})$ is
an $\omega$-solution of ${\IIP}_D$.
\end{rem}

\begin{prop}\label{En la tropicalizacion hay uno de pendiente mayor}
Let $D=\{\eta,\Gamma, c\}$ be  a starting $\omega$-set for an ideal
${\mathcal I}$. There exists
$\tilde{\eta}\in(\mathbb{R}\cup\{\infty\})^M$ such that
$(\omega,\tilde{\eta})\in\tau ({\mathcal I}_D)$ and
$\tilde{\eta}_{\Lambda (\tilde{\eta})}>\den\Gamma\eta_{\Lambda
(\tilde{\eta})}$ coordinate-wise.
\end{prop}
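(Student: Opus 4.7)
The plan is to lift the combinatorial datum $D$ to a genuine $\omega$-solution $\varphi$ of $\IIP$ with $\inser\omega\varphi=\MM_D$, translate it via Remark \ref{ceros de I y de ID}, and read $\tilde\eta$ off the order of the translate.

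First, I would invoke Theorem \ref{Extension del punto uno} applied to $\phi=\MM_D$: by hypothesis $(\omega,\eta)\in\tau(\IIP)$, Proposition \ref{Mupla asociada a data} tells us $\MM_D$ is an $\omega$-solution of $\idinPol\omega\eta\IIP$, and $\ordser\omega\MM_D=\omega\cdot\Gamma^T=\eta$. The theorem then produces an $\omega$-solution $\varphi\in\SSS^M$ of $\IIP$ with $\inser\omega\varphi=\MM_D$. Setting $\tilde\varphi:=\varphi(x^{\den\Gamma I})-\MM_D(x^{\den\Gamma I})$, Remark \ref{ceros de I y de ID} guarantees that $\tilde\varphi$ is an $\omega$-solution of $\IIP_D$, and I would define $\tilde\eta:=\ordser\omega\tilde\varphi$.

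Next, I would compute $\tilde\eta$ coordinatewise. For $i\notin\Lambda(\eta)$ we have $(\MM_D)_i=0$, so $\inser\omega\varphi_i=0$ forces $\varphi_i=0$, giving $\tilde\varphi_i=0$ and $\tilde\eta_i=\infty$; in particular $\Lambda(\tilde\eta)\subseteq\Lambda(\eta)$. For $i\in\Lambda(\eta)$ I would write $\varphi_i=c_ix^{\Gamma_{i,*}}+\psi_i$ with $\ordser\omega\psi_i>\eta_i$ (or $\psi_i=0$); by property~(\ref{orden y ramificacion}) of Remark \ref{propiedades de valser e inser} the substitution $x\mapsto x^{\den\Gamma I}$ multiplies every $\omega$-order by $\den\Gamma$, so $\varphi_i(x^{\den\Gamma I})=c_ix^{\den\Gamma\Gamma_{i,*}}+\psi_i(x^{\den\Gamma I})$ with tail of $\omega$-order strictly greater than $\den\Gamma\,\eta_i$. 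The leading monomial cancels the $i$-th coordinate of $\MM_D(x^{\den\Gamma I})$, leaving $\tilde\varphi_i=\psi_i(x^{\den\Gamma I})$, so $\tilde\eta_i>\den\Gamma\,\eta_i$ whenever $\tilde\varphi_i\neq 0$; this yields the required inequality $\tilde\eta_{\Lambda(\tilde\eta)}>\den\Gamma\,\eta_{\Lambda(\tilde\eta)}$.

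Finally, I would obtain $(\omega,\tilde\eta)\in\tau(\IIP_D)$ by applying Theorem \ref{Extension del punto uno} in the ``only if'' direction to the first term $\inser\omega\tilde\varphi$ of the $\omega$-solution $\tilde\varphi$ of $\IIP_D$. The one case requiring separate treatment is the degenerate $\tilde\varphi=0$: then $\varphi=\MM_D$ is itself an exact $\omega$-solution of $\IIP$, every generator of $\IIP_D$ lies in $\left<y_1,\dots,y_M\right>$, so $\idinPol\omega{(\infty,\dots,\infty)}\IIP_D=\left<y_1,\dots,y_M\right>$ meets $\KK[x^*]$ in no monomial, and the coordinatewise inequality is vacuous. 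The main conceptual point is the lifting of the starting $\omega$-set to an actual $\omega$-solution of $\IIP$ via Theorem \ref{Extension del punto uno}; the rest is a transparent bookkeeping computation with orders.
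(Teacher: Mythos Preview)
Your proof is correct and follows essentially the same approach as the paper's. Both arguments lift $\MM_D$ to a genuine $\omega$-solution of $\IIP$, translate it through Remark~\ref{ceros de I y de ID} to obtain an $\omega$-solution of $\IIP_D$, and define $\tilde\eta$ as its order; the paper cites Propositions~\ref{Kapranow finito}, \ref{Mupla asociada a data} and~\ref{proposicion Tropicalizacion} directly while you package them via Theorem~\ref{Extension del punto uno}, and you make the degenerate case $\tilde\varphi=0$ explicit where the paper leaves it implicit.
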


\begin{proof}

 By Proposition \ref{Kapranow finito} and Proposition \ref{Mupla asociada a data}, $\MM_D$ is the first term of at least one  $\omega$-solution of
$\mathcal{I}$. Say
\[
\phi=\MM_D+\tilde{\phi}\in\cerosS({\mathcal I}),\quad \tilde{\phi}=
\left(\begin{array}{c}
         \tilde{\phi}_1\\
         \vdots\\
         \tilde{\phi}_M
    \end{array}\right)\in \SSS^M,
    \]
    with
    $\ordser{\omega}(\tilde{\phi}_i)>
    \omega\cdot\Gamma_{i,*}=\eta_i
    $ when $\tilde{\phi}_i\neq 0$.

Set
\[
\tilde{\eta}:=\den\Gamma\ordser{\omega}(\tilde{\phi})\stackrel{\text{Remark}
\ref{propiedades de valser e inser},\ref{orden y
ramificacion}}{=}\ordser{\omega}(\tilde{\phi})(x^{\den\Gamma I})
\]
 then
$\tilde{\eta}_i>\den\Gamma\eta_i$ for all $i\in\Lambda
(\tilde{\eta})$.

By Remark \ref{ceros de I y de ID} $\tilde{\phi}(x^{\den\Gamma I})$
is an $\omega$-solution of ${\mathcal I}_D$. Then, by Theorem \ref{Kapranow
finito}, $\inser{\omega}\tilde{\phi}$ is an $\omega$-solution of
$\idinPol{\omega}{\tilde{\eta}}{\mathcal I}_D$. Finally, by Proposition
\ref{proposicion Tropicalizacion}, $(\omega ,\tilde{\eta})$ is in
the tropical variety of ${\mathcal I}_D$.
\end{proof}

\begin{prop}\label{Hay uno de pendiente mayor}
    Let $D=\{\eta,\Gamma, c\}$ be  a starting $\omega$-set for an ideal ${\mathcal I}$.
    There exists  a starting $\omega$-set $D'=\{\eta',\Gamma',
    c'\}$ for ${\mathcal I}_D$ such that ${\eta'}_{\Lambda (\eta')}>\den\Gamma\eta_{\Lambda (\eta')}$ coordinate-wise.
\end{prop}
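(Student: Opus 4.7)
The plan is to chain Proposition \ref{En la tropicalizacion hay uno de pendiente mayor} with Propositions \ref{proposicion Tropicalizacion} and \ref{Mupla asociada a data}; the desired starting $\omega$-set will essentially drop out of these three results combined.

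First, I apply Proposition \ref{En la tropicalizacion hay uno de pendiente mayor} to produce a vector $\tilde{\eta}\in (\RR\cup\{\infty\})^M$ such that $(\omega,\tilde{\eta})\in\tau({\mathcal I}_D)$ and
\[
\tilde{\eta}_{\Lambda(\tilde{\eta})}>\den\Gamma\,\eta_{\Lambda(\tilde{\eta})}
\]
coordinate-wise. The vector $\tilde{\eta}$ will serve as the $\eta'$ component of the starting $\omega$-set I seek, and the required strict inequality is already in hand.

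Second, because $(\omega,\tilde{\eta})$ lies in the tropical variety of ${\mathcal I}_D$, Proposition \ref{proposicion Tropicalizacion} (the ``if'' direction, applied to ${\mathcal I}_D$) guarantees that the initial ideal $\idinPol{\omega}{\tilde{\eta}}{\mathcal I}_D$ admits an $\omega$-solution $\varphi\in\toro{\tilde{\eta}}$. By Corollary \ref{Los ceros del inicial son monomios de orden eta}, this $\varphi$ is an $M$-tuple of monomials of order $\tilde{\eta}$, so I can read off its defining data $D(\varphi)=\{\tilde{\eta},\Gamma',c'\}$: the rows of $\Gamma'\in{\mathcal M}_{M\times N}(\QQ\cup\{\infty\})$ satisfy $\omega\cdot{\Gamma'}^T=\tilde{\eta}$ with $\Gamma'_{i,*}=\underline{\infty}$ precisely for $i\in\Lambda(\tilde{\eta})^{\rm C}$, and $c'\in{\KK^*}^{\Lambda(\tilde{\eta})}\times\{0\}^{\Lambda(\tilde{\eta})^{\rm C}}$. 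Hence $D'=\{\tilde{\eta},\Gamma',c'\}$ is an $\omega$-set in the sense of Definition in Section \ref{omega-data} and $\MM_{D'}=\varphi$.

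Finally, Proposition \ref{Mupla asociada a data} (used in the ``if'' direction) translates the fact that $\MM_{D'}$ is an $\omega$-solution of $\idinPol{\omega}{\tilde{\eta}}{\mathcal I}_D$ into the statement that $D'$ is a starting $\omega$-set for ${\mathcal I}_D$. Setting $\eta':=\tilde{\eta}$, the strict inequality carried over from the first step completes the proof. The argument is purely a bookkeeping concatenation of prior results, so there is no substantial obstacle; the only point worth noting is the identification of the monomial solution furnished by Proposition \ref{proposicion Tropicalizacion} with the defining data of an $\omega$-set, which is exactly the content of Corollary \ref{Los ceros del inicial son monomios de orden eta} together with the definition of $D(\mm)$.
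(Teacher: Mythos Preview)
Your proof is correct and follows essentially the same route as the paper: invoke Proposition~\ref{En la tropicalizacion hay uno de pendiente mayor} to get $\eta'$, then Proposition~\ref{proposicion Tropicalizacion} to get an $\omega$-solution in $\toro{\eta'}$, then Proposition~\ref{Mupla asociada a data} to recognize it as $\MM_{D'}$ for a starting $\omega$-set $D'$. The only cosmetic difference is that you unpack the passage from the monomial solution to its defining data via Corollary~\ref{Los ceros del inicial son monomios de orden eta}, whereas the paper appeals directly to the ``Moreover'' clause of Proposition~\ref{Mupla asociada a data}, which already packages that step.
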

\begin{proof}
    By Proposition \ref{En la tropicalizacion hay uno de pendiente mayor}, there exists
    $\eta'\in {(\RR\cup\{\infty\})}^M$ such that ${\eta'}_{\Lambda  (\eta')}>\den\Gamma\eta_{\Lambda (\eta')}$
    coordinate-wise and $(\omega ,\eta')\in\tau ({\mathcal I}_D)$. By Proposition
    \ref{proposicion Tropicalizacion}, the ideal
    $\idinPol{\omega}{\eta'}{\mathcal I}_D$ has an $\omega$-solution $\phi$ in $\toro{\eta'}$. By Proposition
    \ref{Mupla asociada a data}, $\phi=\MM_{D'}$ where
    $D'=\{\eta',\Gamma',c'\}$ is a starting $\omega$-set for ${\mathcal I}_D$.
\end{proof}

\section{$\omega$-sequences.}\label{omega-sequences}

Given an $M$-tuple $\phi\in\SSS^M$ define inductively
$\{\phi^{(i)}\}_{i=0}^\infty$, and $\{ D^{(i)}\}_{i=0}^\infty$ by:
\begin{itemize}
    \item{For $i=0$}
    \begin{itemize}
        \item
        $\phi^{(0)}:=\phi$
        \item
        $D^{(0)}$ is the defining data of
        $\inser{\omega}\phi$.
    \end{itemize}
    \item{For $i>0$:}
    \begin{itemize}
        \item
        $\phi^{(i)}:=\phi^{(i-1)}(x^{\den\Gamma^{(i-1)}
        I})-\MM_{D^{(i-1)}}(x^{\den\Gamma^{(i-1)}I})$
        \item
         $D^{(i)}$ is the defining data of the
    $M$-tuple of monomials $\inser{\omega}\phi^{(i)}$.
    ($D^{(i)}:=D(\inser{\omega}\phi^{(i)})$).
    \end{itemize}
\end{itemize}
The sequence above
\[
\secuencia (\phi):= \{D^{(i)}\}_{i=0}^\infty
\]
will be called {\bf the defining data sequence for $\phi$}.
\begin{rem}\label{Los eta crecen en el data asociado a serie}
For any $\phi\in\SSS^M$. If $\secuencia (\phi):= \{\eta^{(i)},\Gamma^{(i)},c^{(i)}\}_{i=0}^\infty$ then
${\eta^{(i)}}_{\Lambda (\eta^{(i)})}>\den\Gamma^{(i-1)}{\eta^{(i-1)}}_{\Lambda (\eta^{(i)})}$ coordinate-wise.
\end{rem}

Given a sequence $S=\{D^{(i)}\}_{i=0\ldots K}=\{\eta^{(i)},\Gamma^{(i)}, c^{(i)}\}_{i=0\ldots K}$, with
$K\in\ZZ_{\geq 0}\cup\{\infty\}$.
Set
\begin{itemize}
    \item ${\mathcal I}^{(0)}={\mathcal I}$
    \item ${\mathcal I}^{(i)}={\mathcal I}^{(i-1)}_{D^{(i-i)}}$ for $i\in\{1,\ldots ,K\}$
\end{itemize}
$S$ is called an {\bf $\omega$-sequence for ${\mathcal I}$} if and
only if for $i\in\{0,\ldots ,K\}$

\begin{itemize}
        \item
        $D^{(i)}=\{\eta^{(i)},\Gamma^{(i)}, c^{(i)}\}$ is a starting $\omega$-set for ${\mathcal I}^{(i)}$
    \item
        ${\eta^{(i)}}_{\Lambda (\eta^{(i)})}>\den\Gamma^{(i-1)}{\eta^{i-1}}_{\Lambda (\eta^{(i)})}$ coordinate-wise.
\end{itemize}
As a corollary to Proposition \ref{Hay uno de pendiente mayor} we
have:
\begin{cor}
    Let $\{D^{(i)}\}_{i=0\ldots K}$ be an $\omega$-sequence for ${\mathcal I}$. For any $K'\in\{ K+1,\ldots ,\infty\}$ there exists a sequence $\{D^{(i)}\}_{i=K+1\ldots K'}$ such that $\{D^{(i)}\}_{i=0\ldots K'}$ is an $\omega$-sequence for ${\mathcal I}$.
\end{cor}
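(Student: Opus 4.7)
The plan is to reduce the statement, by a straightforward induction on the length of the extension, to repeated application of Proposition \ref{Hay uno de pendiente mayor}. Concretely, it suffices to handle the case $K'=K+1$; finite extensions follow by iteration, and the case $K'=\infty$ by taking the limit of this iterative construction, which never terminates.

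For the one-step extension, I would proceed as follows. Set $\mathcal{I}^{(0)}:=\mathcal{I}$ and $\mathcal{I}^{(i)}:=\mathcal{I}^{(i-1)}_{D^{(i-1)}}$ for $i\in\{1,\ldots ,K+1\}$, so that in particular $\mathcal{I}^{(K+1)}=\mathcal{I}^{(K)}_{D^{(K)}}$. Since $\{D^{(i)}\}_{i=0}^{K}$ is an $\omega$-sequence, $D^{(K)}=\{\eta^{(K)},\Gamma^{(K)},c^{(K)}\}$ is a starting $\omega$-set for $\mathcal{I}^{(K)}$. Applying Proposition \ref{Hay uno de pendiente mayor} to the ideal $\mathcal{I}^{(K)}$ and its starting $\omega$-set $D^{(K)}$ yields a starting $\omega$-set
\[
D^{(K+1)}=\{\eta^{(K+1)},\Gamma^{(K+1)},c^{(K+1)}\}
\]
for $\mathcal{I}^{(K+1)}$ such that
\[
\eta^{(K+1)}_{\Lambda(\eta^{(K+1)})}>\den\Gamma^{(K)}\,\eta^{(K)}_{\Lambda(\eta^{(K+1)})}
\]
coordinate-wise. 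These are precisely the two conditions required in the definition of an $\omega$-sequence, so $\{D^{(i)}\}_{i=0}^{K+1}$ is an $\omega$-sequence for $\mathcal{I}$, completing the one-step case.

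For the general case, iterate: assuming by induction that $\{D^{(i)}\}_{i=0}^{K+j}$ has been produced as an $\omega$-sequence for $\mathcal{I}$ (with $j\geq 1$), the ideal $\mathcal{I}^{(K+j+1)}=\mathcal{I}^{(K+j)}_{D^{(K+j)}}$ is then well-defined, and the same argument, with $K$ replaced by $K+j$, supplies the next term $D^{(K+j+1)}$. For finite $K'$ this terminates after $K'-K$ steps; for $K'=\infty$ the recursion produces an infinite sequence $\{D^{(i)}\}_{i=0}^{\infty}$ extending the given one.

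I do not expect a genuine obstacle: all the analytic content has already been packaged into Proposition \ref{Hay uno de pendiente mayor}, and what remains is essentially a matter of bookkeeping, namely checking that the data returned by that proposition satisfy verbatim the two bullet points defining an $\omega$-sequence at the newly constructed index. The one small point to keep in mind is that at each stage the ideal $\mathcal{I}^{(K+j)}$ is built purely formally from the data constructed so far, so the hypotheses of Proposition \ref{Hay uno de pendiente mayor} are automatically available at every step of the induction.
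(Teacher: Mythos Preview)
Your proposal is correct and matches the paper's approach exactly: the paper states this result as an immediate corollary of Proposition \ref{Hay uno de pendiente mayor} without writing out any proof, and your argument simply spells out the obvious induction that this entails. There is nothing to add.
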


\begin{prop}\label{ceros dan w-secuencia}
 If $\phi$ is an $\omega$-solution of ${\mathcal I}$ then $\secuencia (\phi )$ is an $\omega$-sequence for ${\mathcal I}$.
 \end{prop}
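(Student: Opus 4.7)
The plan is to proceed by induction on $i$, proving simultaneously that $\phi^{(i)}$ is an $\omega$-solution of $\mathcal{I}^{(i)}$ (with $\mathcal{I}^{(0)}:=\mathcal{I}$ and $\mathcal{I}^{(i)}:=\mathcal{I}^{(i-1)}_{D^{(i-1)}}$) and that $D^{(i)}$ is a starting $\omega$-set for $\mathcal{I}^{(i)}$. The base case $i=0$ is immediate from the hypothesis $\phi^{(0)}=\phi\in\cerosS(\mathcal{I}^{\rm e})$: setting $\eta^{(0)}:=\ordser{\omega}\phi$, Proposition \ref{primera traduccion del teorema} produces $\inser{\omega}\phi$ as an $\omega$-solution of $\idinPol{\omega}{\eta^{(0)}}\mathcal{I}$ lying in $\toro{\eta^{(0)}}$, and Proposition \ref{Mupla asociada a data} then identifies $D^{(0)}=D(\inser{\omega}\phi)$ as a starting $\omega$-set for $\mathcal{I}$.

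For the inductive step, the key ingredient is Remark \ref{ceros de I y de ID}, applied to $\mathcal{I}^{(i-1)}$ with the $\omega$-set $D^{(i-1)}$: since by induction $\phi^{(i-1)}$ is an $\omega$-solution of $\mathcal{I}^{(i-1)}$ and since, by construction, $\phi^{(i)}=\phi^{(i-1)}(x^{\den\Gamma^{(i-1)}I})-\MM_{D^{(i-1)}}(x^{\den\Gamma^{(i-1)}I})$, the remark yields that $\phi^{(i)}$ is an $\omega$-solution of $(\mathcal{I}^{(i-1)})_{D^{(i-1)}}=\mathcal{I}^{(i)}$. Repeating the base-case argument verbatim, with $\mathcal{I}$ replaced by $\mathcal{I}^{(i)}$ and $\phi$ by $\phi^{(i)}$, Proposition \ref{primera traduccion del teorema} together with Proposition \ref{Mupla asociada a data} show that $D^{(i)}=D(\inser{\omega}\phi^{(i)})$ is a starting $\omega$-set for $\mathcal{I}^{(i)}$, closing the induction.

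The second defining condition of an $\omega$-sequence, namely the strict coordinate-wise inequality ${\eta^{(i)}}_{\Lambda(\eta^{(i)})}>\den\Gamma^{(i-1)}{\eta^{(i-1)}}_{\Lambda(\eta^{(i)})}$, does not depend on the ideal at all but only on the construction of $\secuencia(\phi)$: it is exactly the content of Remark \ref{Los eta crecen en el data asociado a serie}, and follows directly from the fact that the subtraction $\phi^{(i-1)}(x^{\den\Gamma^{(i-1)}I})-\MM_{D^{(i-1)}}(x^{\den\Gamma^{(i-1)}I})$ cancels the first term of $\phi^{(i-1)}(x^{\den\Gamma^{(i-1)}I})$ and thus makes the $\omega$-order jump strictly on each active coordinate.

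There is no substantive obstacle in the argument: all the nontrivial content (the Kapranov-type passage between $\omega$-solutions of $\mathcal{I}$ and zeros of its $(\omega,\eta)$-initial ideal, and the dictionary between such zeros and starting $\omega$-sets) is already encapsulated in Propositions \ref{primera traduccion del teorema} and \ref{Mupla asociada a data}. The only point requiring care is the bookkeeping of the ramification $x\mapsto x^{\den\Gamma^{(i-1)}I}$ when invoking Remark \ref{ceros de I y de ID} — in particular, that the ramification factor at step $i$ is prescribed by $D^{(i-1)}$ rather than by $D^{(i)}$, and that $\ordser{\omega}$ transforms predictably under this rescaling by property \ref{orden y ramificacion} of Remark \ref{propiedades de valser e inser}.
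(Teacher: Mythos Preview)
Your proof is correct and follows essentially the same approach as the paper. The paper's own proof is a single sentence citing Remarks \ref{Los eta crecen en el data asociado a serie} and \ref{ceros de I y de ID}; you have simply unpacked the induction that those remarks encode, and made explicit the appeal to Propositions \ref{primera traduccion del teorema} and \ref{Mupla asociada a data} (equivalently, Theorem \ref{Extension del punto uno}) needed to pass from ``$\phi^{(i)}$ is an $\omega$-solution of $\mathcal{I}^{(i)}$'' to ``$D^{(i)}$ is a starting $\omega$-set for $\mathcal{I}^{(i)}$''.
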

 \begin{proof}
 This is a direct consequence of Remarks
  \ref{Los eta crecen en el data asociado a serie}  and \ref{ceros de I y de
  ID}.
\end{proof}

\section{The solutions.}\label{The solutions}

Given an $\omega$-sequence $S=\{ D^{(i)}\}_{i=0\ldots K}$ for
${\mathcal I}$, with $D^{(i)}=\{\eta^{(i)},\Gamma^{(i)},c^{(i)}\}$
set
\begin{equation}\label{sucesion de ramificaciones}
r^{(0)}:=1\quad\text{and}\quad r^{(i)}:=\frac{1}{\prod_{j=0}^{i-1}\den\Gamma^{(j)}}\,\text{for}\, i>0.
\end{equation}
 The {\bf series defined by $S$} is the series
\[
\seriedef (S):= \sum_{i=0}^K \MM_{D^{(i)}} (x^{r^{(i)}I}).
\]

The following theorem is the extension of Point \ref{tres} of Newton-Puiseux's
    method:

\begin{thm}\label{ultimo teorema} \label{data da ceros}
If $S=\{D^{(i)}\}_{i=0}^\infty$ is an $\omega$-sequence for
${\mathcal I}$ then $\seriedef (S)$ is an $\omega$-solution of
${\mathcal I}$.
\end{thm}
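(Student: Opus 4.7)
The plan is to prove $f(\seriedef(S))=0$ for every $f\in\mathcal{I}$, exploiting the recursive structure of the $\omega$-sequence. Set $\phi:=\seriedef(S)$ and $\phi_k:=\sum_{i=0}^k \MM_{D^{(i)}}(x^{r^{(i)}I})$. For each $f\in\mathcal{I}$ define inductively $\tilde{f}_0:=f$ and
\[
\tilde{f}_k(x,y)\,:=\,f\bigl(x^{I/r^{(k)}},\,y+\phi_{k-1}(x^{I/r^{(k)}})\bigr).
\]
The identity $r^{(i)}/r^{(k)}=\prod_{j=i}^{k-1}\den\Gamma^{(j)}$ yields the recursion $\tilde{f}_{k+1}(x,y)=\tilde{f}_k\bigl(x^{\den\Gamma^{(k)}I},\,y+\MM_{D^{(k)}}(x^{\den\Gamma^{(k)}I})\bigr)$, so by the very definition of $\mathcal{I}^{(k+1)}=\mathcal{I}^{(k)}_{D^{(k)}}$ one gets $\tilde{f}_k\in\mathcal{I}^{(k)}$ for every $k$. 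Substituting $y=\MM_{D^{(k)}}(x)$ produces the key identity $\tilde{f}_k(x,\MM_{D^{(k)}}(x))=f(\phi_k)(x^{I/r^{(k)}})$, which by Remark~\ref{propiedades de valser e inser}(\ref{orden y ramificacion}) translates into $\ordser{\omega}(f(\phi_k))=r^{(k)}\,\ordser{\omega}(\tilde{f}_k(\MM_{D^{(k)}}))$.

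Since $D^{(k)}$ is a starting $\omega$-set for $\mathcal{I}^{(k)}$, Proposition~\ref{Mupla asociada a data} says that $\MM_{D^{(k)}}$ is an $\omega$-solution of $\idinPol{\omega}{\eta^{(k)}}\mathcal{I}^{(k)}$, so in particular $\inPol{\omega}{\eta^{(k)}}(\tilde{f}_k)$ vanishes at $\MM_{D^{(k)}}$. Arguing as in Lemma~\ref{key}, this forces the strict inequality $\ordser{\omega}(\tilde{f}_k(\MM_{D^{(k)}}))>\ordPol{\omega}{\eta^{(k)}}(\tilde{f}_k)$, hence $\ordser{\omega}(f(\phi_k))>r^{(k)}\ordPol{\omega}{\eta^{(k)}}(\tilde{f}_k)$. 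Moreover, expanding $\tilde{f}_{k+1}$ term by term, the $\omega$-sequence inequality $\eta^{(k+1)}_{\Lambda(\eta^{(k+1)})}>\den\Gamma^{(k)}\eta^{(k)}_{\Lambda(\eta^{(k+1)})}$ together with the cancellation $\inPol{\omega}{\eta^{(k)}}(\tilde{f}_k)(\MM_{D^{(k)}})=0$ forces the strict growth $r^{(k+1)}\ordPol{\omega}{\eta^{(k+1)}}(\tilde{f}_{k+1})>r^{(k)}\ordPol{\omega}{\eta^{(k)}}(\tilde{f}_k)$.

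To conclude, one checks that $\phi\in\SSS^M$ is genuinely defined: the exponents all lie in a common translate of an $\omega$-positive cone, and the strictly increasing sequence $r^{(i)}\eta^{(i)}_{\Lambda(\eta^{(i)})}$ ensures only finitely many terms contribute to any given coefficient of $\phi$. Combined with $\ordser{\omega}(f(\phi_k))\to\infty$ and a Taylor-expansion estimate bounding $\ordser{\omega}(f(\phi)-f(\phi_k))$ below by $\min_j(\ordser{\omega}(\partial_{y_j}f(\phi))+r^{(k+1)}\eta^{(k+1)}_j)$, we obtain $\ordser{\omega}(f(\phi))=\infty$, i.e.\ $f(\phi)=0$. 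The main obstacle is showing that $r^{(k)}\ordPol{\omega}{\eta^{(k)}}(\tilde{f}_k)$ and $r^{(k)}\eta^{(k)}_j$ genuinely diverge, not merely strictly increase: bounded accumulation must be ruled out via a uniform lower bound on the gaps, ultimately resting on the rational structure of $\omega$, the bounded denominators of the $\Gamma^{(k)}$, and the $N$-admissibility hypothesis restricting which weights $(\omega,\eta)$ can lie in the tropical variety at each stage.
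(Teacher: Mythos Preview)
Your strategy is the classical valuation-growth argument: show that $\ordser{\omega}\bigl(f(\phi_k)\bigr)\to\infty$ for each $f\in\mathcal I$ and conclude $f(\phi)=0$. You correctly identify the obstacle in your last paragraph, and that obstacle is a genuine gap, not a technicality. Strict increase of $r^{(k)}\ordPol{\omega}{\eta^{(k)}}(\tilde f_k)$ does not give divergence; nothing in the setup bounds the denominators $\den\Gamma^{(k)}$ a priori, and no uniform lower bound on the successive gaps has been produced. The same unresolved issue undercuts two other steps you pass over: that $\seriedef(S)$ is a Puiseux series at all (a common denominator for the exponents requires only finitely many $\den\Gamma^{(k)}>1$), and that its exponents lie in a single translate of an $\omega$-positive cone. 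Without these, $f(\phi)$ is not even defined, so the Taylor estimate at the end has no object to bound.

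The paper's proof avoids valuation growth entirely and uses a finiteness argument instead. Since $\SSS$ is algebraically closed and $\mathcal I$ is $N$-admissible, the extended ideal $\mathcal I^{\rm e}\subset\SSS[y]$ has only finitely many zeroes. For each $k$ one chooses an actual zero $\phi^{(k)}\in\cerosS(\mathcal I^{(k)})$ with $\ordser{\omega}\phi^{(k)}=\eta^{(k)}$ (this exists by Proposition~\ref{proposicion Tropicalizacion}), and then $\tilde\phi^{(k)}:=\seriedef\bigl(\{D^{(j)}\}_{j<k}\bigr)+\phi^{(k)}(x^{r^{(k)}I})$ is, by Remark~\ref{ceros de I y de ID}, an honest element of the finite set $\cerosS(\mathcal I^{\rm e})$. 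Pigeonhole forces some $\tilde\phi^{(K)}$ to recur for infinitely many $k$, and since $\tilde\phi^{(k)}$ agrees with $\seriedef(S)$ through its first $k$ terms, this single element $\tilde\phi^{(K)}$ must equal $\seriedef(S)$. This simultaneously establishes that $\seriedef(S)\in\SSS^M$ and that it is a zero of $\mathcal I$, with no need to control gaps or denominators. The contrast is that your approach tries to build the solution from scratch by estimates, whereas the paper exploits that all solutions already exist in $\SSS^M$ and merely identifies $\seriedef(S)$ with one of them.
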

\begin{proof}
Let $S=\{ D^{(i)}\}_{i=1}^\infty$ be an $\omega$-sequence for ${\mathcal I}$. Where $D^{(i)}=\{\eta^{(i)},\Gamma^{(i)},c^{(i)}\}$. Let $\{ {\mathcal I}^{(i)}\}_{i=0}^\infty$ be defined by ${\mathcal I}^{(0)}:={\mathcal I}$ and ${\mathcal I}^{(i)}:={\mathcal I}^{(i-1)}_{D^{(i-1)}}$. We have that $D^{(i)}$ is a starting $\omega$-set for ${\mathcal I}^{(i)}$.

By Proposition \ref{proposicion Tropicalizacion} for each $i\in\NN$
there exists $\phi^{(i)}\in\SSS^M$ such that
$\ordser{\omega}\phi^{(i)}=\eta^{(i)}$ and $\phi^{(i)}\in\cerosS
\left( {\mathcal I}^{(i)}\right)$.

Set $\{r^{(i)}\}_{i=0}^\infty$ as in (\ref{sucesion de
ramificaciones})  and $\tilde{\phi^{(i)}}:= \seriedef
(\{D^{(i)}\}_{j=0}^{i-1})+\phi^{(i)}(x^{r^{(i)} I})$.

For each $i\in\NN$, by Remark \ref{ceros de I y de ID},
$\tilde{\phi^{(i)}}\in\cerosS \left( {\mathcal
I}\right)\subset\SSS^M$.

Since there are only a finite number of zeroes there exists $K\in\NN$ such that $\tilde{\phi^{(i)}}=\tilde{\phi^{(K)}}$
for all $i>K$. Then
\[
\seriedef (S)=\tilde{\phi^{(K)}}\in\cerosS \left({\mathcal
I}\right)\subset\SSS^M.
\]
\end{proof}

Theorem \ref{ultimo teorema} together with Proposition \ref{ceros
dan w-secuencia} gives:
\begin{cor} {\bf Answer to Question \ref{problema}.}\\
    Let ${\mathcal I}\subset\KK[x^*,y]$ be an N-admissible ideal
    and let $\omega\in\RR^N$ be
    of rationally independent coordinates.

The M-tuple of series defined by an $\omega$-sequence for ${\mathcal I}$ is an element of $\SSS^M$.

An M-tuple of series $\phi\in\SSS^M$ is an $\omega$-solution of ${\mathcal I}$ if and only if $\phi$ is an M-tuple of series defined by an $\omega$-sequence for ${\mathcal I}$.
\end{cor}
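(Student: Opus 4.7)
The plan is to derive the corollary essentially for free from the two results quoted just before its statement, namely Theorem \ref{ultimo teorema} and Proposition \ref{ceros dan w-secuencia}, after verifying one compatibility statement between the maps $\phi \mapsto \secuencia(\phi)$ and $S \mapsto \seriedef(S)$. The first displayed assertion, that $\seriedef(S) \in \SSS^M$ whenever $S$ is an $\omega$-sequence for $\mathcal{I}$, is really contained in Theorem \ref{ultimo teorema}: that theorem states $\seriedef(S)$ is an $\omega$-solution of $\mathcal{I}$, and by the definition of $\omega$-solution (in the reformulation of Question \ref{problema}) this means $\seriedef(S) \in \cerosS(\mathcal{I}^{\rm e}) \subset \SSS^M$.

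For the biconditional, the implication from right to left is exactly Theorem \ref{ultimo teorema}. For the converse, given an $\omega$-solution $\phi \in \SSS^M$, Proposition \ref{ceros dan w-secuencia} tells us that $\secuencia(\phi) = \{D^{(i)}\}_{i=0}^\infty$ is an $\omega$-sequence for $\mathcal{I}$. What remains is to check the reconstruction identity
\[
\phi \;=\; \seriedef(\secuencia(\phi)) \;=\; \sum_{i=0}^\infty \MM_{D^{(i)}}(x^{r^{(i)} I}),
\]
so that $\phi$ really is the series defined by the $\omega$-sequence it produces.

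This identity I would establish inductively from the recursive definition of $\phi^{(i)}$. By construction $\inser{\omega}\phi^{(0)} = \MM_{D^{(0)}}$ and, re-scaling the defining equation $\phi^{(i+1)} = \phi^{(i)}(x^{\den\Gamma^{(i)} I}) - \MM_{D^{(i)}}(x^{\den\Gamma^{(i)} I})$ by $x \mapsto x^{r^{(i+1)} I}$, one obtains the telescoping relation
\[
\phi^{(i)}(x^{r^{(i)} I}) \;=\; \MM_{D^{(i)}}(x^{r^{(i)} I}) + \phi^{(i+1)}(x^{r^{(i+1)} I}).
\]
Summing and using that by Remark \ref{Los eta crecen en el data asociado a serie} the coordinate-wise orders $\eta^{(i)}$ in $\Lambda(\eta^{(i)})$ strictly increase along the sequence (and that the coordinates projected to $\Lambda(\eta^{(i)})^{\rm C}$ already vanish), the tails $\phi^{(i+1)}(x^{r^{(i+1)} I})$ tend to zero in the $\omega$-valuation, so the series on the right-hand side converges term by term to $\phi$ in $\SSS^M$.

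The main conceptual issue, and the only non-clerical step, is justifying this convergence inside $\SSS^M$: one must check that the partial sums and the tails all live in a single $\omega$-positive cone (up to translation) so that the infinite sum makes sense as an $\omega$-positive Puiseux series. This is where the choice of $\omega$ with rationally independent coordinates and the increasing-orders remark are essential; once these are in place, the rest is bookkeeping combining the two quoted results.
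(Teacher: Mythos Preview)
Your approach is correct and matches the paper's: the corollary is stated there as an immediate consequence of Theorem~\ref{ultimo teorema} and Proposition~\ref{ceros dan w-secuencia}, with no further proof given. You are simply making explicit the reconstruction identity $\phi=\seriedef(\secuencia(\phi))$ that the paper leaves implicit; your telescoping argument is the natural way to see it, and the convergence of the tails follows because each coordinate of $\phi$ has exponents in a lattice intersected with a translate of an $\omega$-positive cone, so the set of $\omega$-values below any bound is finite and a strictly increasing sequence of such values must be unbounded.
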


\section{The tropical variety of a quasi-ordinary
singularity.}

Let $(V,\underline{0})$ be a singular $N$-dimensional germ of
algebraic variety. $(V,\underline{0})$ is said to be {\bf
quasi-ordinary} when it admits a projection $\pi:
(V,\underline{0})\longrightarrow (\KK^N,\underline{0})$ whose
discriminant is contained in the coordinate hyperplanes. Such a
projection is called a {\bf quasi-ordinary projection}.

Quasi-ordinary singularities admit analytic local parametrizations.
This was shown for hypersurfaces by S. Abhyankar
\cite{Abhyankar:1955} and extended to arbitrary codimension in
\cite{FAroca:2004}. Quasi-ordinary singularities have been the
object of study of many research papers
\cite[...]{ArocaSnoussi:2005,Gau:1988,Tornero:2001,Popescu-Pampu:2003}.

\begin{cor}
Let $V$ be an $N$-dimensional algebraic variety embedded in
$\CC^{N+M}$ with a quasi-ordinary analytically irreducible
singularity at the origin. Let
\[
\begin{array}{cccc}
    \pi:
        & V
            & \longrightarrow
                & \CC^N\\
        & (x_1,\ldots ,x_{N+M})
            & \mapsto
                & (x_1,\ldots ,x_N)
\end{array}
\]
 be a quasi-ordinary projection.

Let ${\mathcal I}\subset\KK [x_1,\ldots ,x_{N+M}]$ be the defining
ideal of $V$. Then for any $\omega\in {\RR_{>0}}^N$ of rationally independent coordinates there exists a
{\bf unique} $e\in {\RR_{>0}}^M$ such that $(\omega ,e)$ is in the tropical variety ${\mathcal
T}({\mathcal I})$.
\end{cor}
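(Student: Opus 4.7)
The plan is to prove existence via the analytic parametrization afforded by the quasi-ordinary hypothesis, and uniqueness via analytic irreducibility together with the explicit form of the Galois action on such parametrizations.

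For existence, I would first invoke the parametrization of $(V,\underline{0})$ from \cite{FAroca:2004}: there exists an $M$-tuple $\phi \in {\SSS}^M$ of fractional power series with exponents in the first orthant and with strictly positive $\omega$-order such that $(\underline{x},\phi(\underline{x}))$ parametrizes $(V,\underline{0})$. In particular $\phi$ is an $\omega$-solution of ${\mathcal I}$. Setting $e := \ordser{\omega}(\phi)$, the positivity of $\omega$ and of the exponents of each $\phi_j$ forces $e \in {\RR_{>0}}^M$. By Theorem \ref{Extension del punto uno} (equivalently, by combining Proposition \ref{primera traduccion del teorema} with Proposition \ref{proposicion Tropicalizacion}), the pair $(\omega,e)$ lies in $\tau({\mathcal I})$.

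For uniqueness, suppose $(\omega,e') \in \tau({\mathcal I})$ with $e' \in {\RR_{>0}}^M$. Proposition \ref{proposicion Tropicalizacion} yields an $\omega$-solution of $\idinPol{\omega}{e'}{\mathcal I}$ inside $\toro{e'}$, and then Proposition \ref{primera traduccion del teorema} produces an $\omega$-solution $\psi \in {\SSS}^M$ of ${\mathcal I}$ with $\ordser{\omega}(\psi) = e'$. Since $e'$ is coordinate-wise positive, condition (\ref{el valor es positivo}) holds for $\psi$, so $\psi$ defines a local parametrization $\underline{x} \mapsto (\underline{x},\psi(\underline{x}))$ of an open wedge of $V$ having the origin as accumulation point, in the sense of Section \ref{The local parametrizations defined by the series}.

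The crucial step is then to show $\ordser{\omega}(\psi) = \ordser{\omega}(\phi)$. By analytic irreducibility of $(V,\underline{0})$, the wedges produced by $\phi$ and by $\psi$ must lie in the unique analytic branch at the origin. After a common ramification $x_i \mapsto x_i^{1/K}$ that clears all denominators, $\phi$ and $\psi$ become genuine analytic parametrizations of this branch that both factor the projection $\pi$. The quasi-ordinary hypothesis ensures that the generic fiber of $\pi$ is a finite set on which the local monodromy group acts transitively, so $\psi$ is obtained from $\phi$ by a Galois substitution of the form $x_i^{1/K} \mapsto \zeta_i\, x_i^{1/K}$ for some $K$-th roots of unity $\zeta_i$. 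Such a substitution multiplies each monomial by a root of unity without altering its exponent, hence $\phi$ and $\psi$ have the same set of exponents and, in particular, the same $\omega$-order; so $e' = e$. The main obstacle I expect is this last step: rigorously translating analytic irreducibility of the germ into the statement that \emph{every} $\omega$-solution of positive order (not just the canonical Abhyankar--Aroca parametrization) lies in the monodromy orbit of $\phi$; this is the classical content of the quasi-ordinary theory, but must be invoked with care to cover all of ${\SSS}^M$ rather than just convergent Puiseux series.
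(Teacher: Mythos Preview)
Your proposal is correct and follows essentially the same route as the paper: existence via the Abhyankar--Aroca parametrization from \cite{FAroca:2004} together with Theorem~\ref{Extension del punto uno}, and uniqueness by producing from any $(\omega,e')\in\tau({\mathcal I})$ an $\omega$-solution $\psi$ of order $e'$ and then showing that $\psi$ differs from the canonical parametrization $\phi$ only by a root-of-unity substitution $x_i^{1/k}\mapsto \xi_i x_i^{1/k}$. The obstacle you flag is precisely the part the paper spells out in detail: it uses the convergence results of \cite{FAroca:2004} to realize both $\phi$ and $\psi$ as genuine analytic maps $\Phi,\Psi$ on overlapping domains near the origin, observes that irreducibility makes $\Phi$ surjective onto a full neighborhood so that $\Psi(\underline t)=\Phi(\underline t')$ pointwise, and then reads off the roots of unity from $\pi\circ\Psi=\pi\circ\Phi$ together with a continuity argument.
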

\begin{proof}

Since $\pi$ is quasi-ordinary there exists $k$ and an $M$-tuple of
analytic series in $N$ variables $\varphi_1,\ldots ,\varphi_M$ such
that $(t_1^k,\ldots ,t_N^k,\varphi_1(\underline{t}),\ldots
,\varphi_M(\underline{t}))$ are parametric equations of $\cerosP
({\mathcal I})$ about the origin \cite{FAroca:2004}.

For any $\omega\in {\RR_{>0}}^N$ of rationally independent
coordinates, the first orthant is $\omega$-positive and then
$(\varphi_1,\ldots ,\varphi_M)$ is an element of ${\SSS}^M$. Set

$\eta_\omega:=\ordser\omega (\varphi_1,\ldots ,\varphi_M)$. The
$M$-tuple of Puiseux series (with positive exponents)
$y:=(\varphi_1({x_1}^{\frac{1}{k}},\ldots
,{x_N}^{\frac{1}{k}}),\ldots ,\varphi_M({x_1}^{\frac{1}{k}},\ldots
,{x_N}^{\frac{1}{k}}))$ is an $\omega$-solution of $\IIP$ with
$\ordser\omega
 (y) =\frac{1}{k}\eta_\omega$
 By Theorem \ref{Extension del punto uno} $(\omega ,\frac{1}{k}\eta_\omega)$ is
 of $\IIP$.

 Now we will show that $(\omega ,e)\in {\mathcal T} (\IIP)$ implies
 $e=\frac{1}{k}\eta_\omega$.

Set $\Phi$ to be the map:
\[
\begin{array}{cccc}
    \Phi:
        & U
            & \longrightarrow
                & \CC^{N+M}\\
        & (t_1,\ldots ,t_N)
            & \mapsto
                & (t_1^k,\ldots ,t_N^k,\varphi_1(\underline{t}),\ldots
,\varphi_M(\underline{t}))
\end{array}
\]

 Since the
singularity is analytically irreducible this parametrization covers
a neighborhood of the singularity at the origin. Let $U\subset\CC^N$
be the common domain of convergence of $\varphi_1,\ldots
,\varphi_M$. $U$ contains a neighborhood of the origin. We have
 $\Phi (U)= \pi^{-1} (U)\cap V$.

Let $y_1,\ldots ,y_M$ be Puiseux series with exponents in some
$\omega$-positive rational cone $\sigma$ (with $\ordser{\omega}
\underline{y}=e\in{\RR_{> 0}}^M$) such that $f(x_1,\ldots ,x_N,
y_1(\underline{x}),\ldots ,y_M(\underline{x}))=0$ for all $f\in
{\mathcal I}$.

Take $k'$ such that $\psi_j(\underline{t}):= y_j({t_1}^{k'},\ldots
,{t_N}^{k'})$ is a series with integer exponents for all $j=1\ldots
M$. (This may be done since they are Puiseux series).

There exists an open set $W\subset\CC^N$ where all the series
$\psi_j$ converge, that has the origin as accumulation point .

Set $\Psi$ to be the map:
\[
\begin{array}{cccc}
    \Psi :
        & W
            & \longrightarrow
                & \CC^{N+M}\\
        & (t_1,\ldots ,t_N)
            & \mapsto
                & ({t_1}^{k'},\ldots ,{t_N}^{k'},
                \psi_1(\underline{t}),\ldots ,\psi_M(\underline{t})).
\end{array}
\]

Since $U$ is a neighborhood of the origin, $W':= U\cap W\neq
\emptyset$.

Ramifying again, if necessary, we may suppose that $k=k'$.

Take $\underline{t}\in W'$. There exists $\underline{t}'\in U$ such

that $\Psi (\underline{t}) = \Phi (\underline{t}')$. Then
$\pi\circ\Psi (\underline{t}) = \pi\circ\Phi (\underline{t}')$, and,
then $({t_1}^k,\ldots ,{t_N}^k)=({t'_1}^k,\ldots ,{t'_N}^k).$

There exists an $N$-tuple of $k$-roots of the unity $\xi_1,\ldots
\xi_N$ such that $(t_1,\ldots ,t_N)=(\xi_1 t'_1,\ldots ,\xi_N
t'_N)$. By continuity this $N$-tuple is the same for all
$\underline{t}\in W'$.

We have $\Psi (t_1,\ldots ,t_N)=\Phi (\xi_1 t_1,\ldots ,\xi_N t_N)$
on $W'$. Since $W'$ contains an open set, we have the equality of
series $\psi_j (t_1,\ldots ,t_N)=\phi_j (\xi_1 t_1,\ldots ,\xi_N
t_N)$.

Then $(y_1,\ldots ,y_M)= (\varphi_1 (\xi_1
{x_1}^{\frac{1}{k}},\ldots ,\xi_N {x_N}^{\frac{1}{k}}),\ldots
,\varphi_M (\xi_1 {x_1}^{\frac{1}{k}},\ldots ,\xi_N
{x_N}^{\frac{1}{k}}))$ for some $N$-tuple of roots of unity
$(\xi_1,\ldots ,\xi_N)$ and then
\[
\ordser{\omega} (y_1,\ldots ,y_M) =\frac{1}{k} \ordser{\omega}
(\varphi_1,\ldots \varphi_M).
\]

The conclusion follows from Theorem \ref{Extension del punto uno}.

\end{proof}

\section*{Closing remarks.} \label{Closing remarks section}

In the literature there are many results relating the Newton
polyhedron of a hypersurface and invariants of its singularities.
See for example \cite{Kouchirenko:1976}. To extend this type of
theorems to arbitrary codimension the usual approach has been to
work with the Newton polyhedrons of a system of generators (see for
example \cite{Oka:1997}). The results presented here suggest that
using the notion of tropical variety better results may be obtained.

Both Newton-Puiseux's and McDonald's algorithm have been extended
for an ordinary differential equation \cite{Fine:1889} and a partial
differential equation \cite{FArocaJCano:2001,FArocaJCanoFJung:2003}
respectively. The algorithm presented here can definitely  be extended to
 systems of partial differential equations; a first step
in this direction can be found in \cite{FAroca:2009}.

\section*{Acknowledgments}
The first and the third author were partially supported by CONACyT
55084 and UNAM: PAPIIT IN 105806 and  IN 102307.\\

During the preparation of this work the third author was profiting
of the post-doctoral fellowship CONACyT 37035.



\def\cprime{$'$}


\begin{thebibliography}{10}

\bibitem{Abhyankar:1955}
S.~Abhyankar.
\newblock On the ramification of algebraic functions.
\newblock {\em Amer. J. Math.}, 77:575--592, 1955.

\bibitem{TAranda:2002}
T.~Aranda.
\newblock {\em Anillos de series generalizadas y ecuaciones diferenciales
  ordinarias}.
\newblock PhD thesis, Universidad de Valladolid, Julio 2002.

\bibitem{FAroca:2004}
F.~Aroca.
\newblock Puiseux parametric equations of analytic sets.
\newblock {\em Proc. Amer. Math. Soc.}, 132(10):3035--3045 (electronic), 2004.

\bibitem{FAroca:2009}
F.~Aroca.
\newblock Extensión del lema de newton a sistemas de ecuaciones en derivadas
  parciales.
\newblock Aviable at: http://www.matcuer.unam.mx/personal.php?id=12, 2009.

\bibitem{FArocaJCano:2001}
F.~Aroca and J.~Cano.
\newblock Formal solutions of linear {PDE}s and convex polyhedra.
\newblock {\em J. Symbolic Comput.}, 32(6):717--737, 2001.
\newblock Effective methods in rings of differential operators.

\bibitem{FArocaJCanoFJung:2003}
F.~Aroca, J.~Cano, and F.~Jung.
\newblock Power series solutions for non-linear pde's.
\newblock In {\em Proceedings of the 2003 international symposium on Symbolic
  and algebraic computation}, pages 15--22. ACM Press, 2003.

\bibitem{ArocaIlardi:2009}
F.~Aroca and G.~Ilardi.
\newblock A family of algebraically closed fields containing polynomials in
  several variables.
\newblock {\em Comm. Algebra}, 37(4):1284--1296, 2009.

\bibitem{ArocaSnoussi:2005}
F.~Aroca and J.~Snoussi.
\newblock Normal quasi-ordinary singularities.
\newblock In {\em Singularit\'es Franco-Japonaises}, volume~10 of {\em S\'emin.
  Congr.}, pages 1--10. Soc. Math. France, Paris, 2005.

\bibitem{Bogart:2007}
T.~Bogart, A.~N. Jensen, D.~Speyer, B.~Sturmfels, and R.~R. Thomas.
\newblock Computing tropical varieties.
\newblock {\em J. Symbolic Comput.}, 42(1-2):54--73, 2007.

\bibitem{BrieskornKnorrer:1986}
E.~Brieskorn and H.~Knorrer.
\newblock {\em Plane algebraic curves}.
\newblock Birkhauser Verlag, Basel, 1986.
\newblock Translated from the German by John Stillwell.

\bibitem{Draisma:2008}
J.~Draisma.
\newblock A tropical approche to secant dimensions.
\newblock {\em J. Pure Appl. Algebra}, 212(2):349--363, 2008.
\newblock arXiv:math.AG/0605345.

\bibitem{EinsiedlerKapranov:2006}
M.~Einsiedler, M.~Kapranov, and D.~Lind.
\newblock Non-{A}rchimedean amoebas and tropical varieties.
\newblock {\em J. Reine Angew. Math.}, 601:139--157, 2006.
\newblock arXiv:math.AG/0408311v2.

\bibitem{Fine:1889}
H.~Fine.
\newblock On the functions defined by differential equations, with an extension
  of the {Puiseux Polygon} construction to these equations.
\newblock {\em Amer. Jour. of Math.}, XI:317--328, 1889.

\bibitem{Gathmann:2006}
A.~Gathmann.
\newblock Tropical algebraic geometry.
\newblock {\em Jahresber. Deutsch. Math.-Verein.}, 108(1):3--32, 2006.
\newblock arXiv:math.AG/0601322v1".

\bibitem{Gau:1988}
Y.-N. Gau.

\newblock Embedded topological classification of quasi-ordinary singularities.
\newblock {\em Mem. Amer. Math. Soc.}, 74(388):109--129, 1988.
\newblock With an appendix by Joseph Lipman.

\bibitem{GonzalezPerez:2000}
P.~D. Gonz{\'a}lez~P{\'e}rez.
\newblock Singularit\'es quasi-ordinaires toriques et poly\`edre de {N}ewton du
  discriminant.
\newblock {\em Canad. J. Math.}, 52(2):348--368, 2000.

\bibitem{HeptTheobald:2007}
K.~Hept and T.~Theobald.
\newblock Tropical bases by regular projections.
\newblock arXiv:0708.1727v2.

\bibitem{ItenbergMikhalkin:2007}
I.~Itenberg, G.~Mikhalkin, and E.~Shustin.
\newblock {\em Tropical algebraic geometry}, volume~35 of {\em Oberwolfach
  Seminars}.
\newblock Birkh\"auser Verlag, Basel, 2007.

\bibitem{JensenMarkwig:2008}
A.~N. Jensen, H.~Markwig, and T.~Markwig.
\newblock An algorithm for lifting points in a tropical variety.
\newblock {\em Collect. Math.}, 59(2):129--165, 2008.

\bibitem{Katz:2009}
E.~Katz.
\newblock A tropical toolkit.
\newblock {\em Expositiones Mathematicae}, 27(1):1 -- 36, 2009.
\newblock arXiv:math/0610878.

\bibitem{Kouchirenko:1976}
A.~G. Kouchnirenko.
\newblock Poly\`edres de {N}ewton et nombres de {M}ilnor.
\newblock {\em Invent. Math.}, 32(1):1--31, 1976.

\bibitem{Maurer:1980}
J.~Maurer.
\newblock Puiseux expansion for space curves.
\newblock {\em Manuscripta math.}, 32:91--100, 1980.

\bibitem{JMcDonald:1995}
J.~McDonald.
\newblock Fiber polytopes and fractional power series.
\newblock {\em J. Pure Appl. Algebra}, 104(2):213--233, 1995.

\bibitem{JMcDonald:2002}
J.~McDonald.
\newblock Fractional power series solutions for systems of equations.
\newblock {\em Discrete Comput. Geom.}, 27(4):501--529, 2002.

\bibitem{Newton:1670}
I.~Newton.
\newblock {\em The mathematical papers of {I}saac {N}ewton. {V}ol. {III}:
  1670--1673}.
\newblock Edited by D. T. Whiteside, with the assistance in publication of M.
  A. Hoskin and A. Prag. Cambridge University Press, London, 1969.

\bibitem{Oka:1997}

M.~Oka.
\newblock {\em Non-degenerate complete intersection singularity}.
\newblock Actualit\'es Math\'ematiques. [Current Mathematical Topics]. Hermann,
  Paris, 1997.

\bibitem{Payne:2009}
S.~Payne.
\newblock Fibers of tropicalization.
\newblock {\em Math. Z.}, 262(2):301--311, 2009.

\bibitem{Popescu-Pampu:2003}
P.~Popescu-Pampu.
\newblock Two-dimensional iterated torus knots and quasi-ordinary surface
  singularities.
\newblock {\em C. R. Math. Acad. Sci. Paris}, 336(8):651--656, 2003.


\bibitem{Puiseux:1850}
V.~Puiseux.
\newblock Recherches sur les fonctions alg\'ebriques.
\newblock {\em J. de math. pures et appl.}, 15:365--480, 1850.

\bibitem{RichterSturmfels:2005}
J.~Richter-Gebert, B.~Sturmfels, and T.~Theobald.
\newblock First steps in tropical geometry.
\newblock In {\em Idempotent mathematics and mathematical physics}, volume 377
  of {\em Contemp. Math.}, pages 289--317. Amer. Math. Soc., Providence, RI,
  2005.
\newblock arXiv:math.AG/0306366.

\bibitem{SotoVicente:2006}
M.~J. Soto and J.~L. Vicente.
\newblock Polyhedral cones and monomial blowing-ups.
\newblock {\em Linear Algebra Appl.}, 412(2-3):362--372, 2006.

\bibitem{Tornero:2001}
J.~M. Tornero.
\newblock Some geometric aspects of {P}uiseux surfaces.
\newblock In {\em Proceedings of the {I}nternational {C}onference on
  {A}lgebraic {G}eometry and {S}ingularities ({S}panish) ({S}evilla, 2001)},
  volume~19, pages 731--744, 2003.

\bibitem{Walker:1978}
R.~J. Walker.
\newblock {\em Algebraic curves}.
\newblock Springer-Verlag, New York, 1978.
\newblock Reprint of the 1950 edition.

\end{thebibliography}
\end{document}